\newtheorem{theorem}{Theorem}[section]
\newtheorem{lemma}[theorem]{Lemma}
\newtheorem{corollary}[theorem]{Corollary}
\theoremstyle{definition}
\newlength{\Oldarrayrulewidth}
\newcommand{\Z}{\mathbb{Z}}
\newcommand{\p}{\textup{\textbf{p}}}
\begin{document}

\author[1]{Chris~Bispels\thanks{cbispel1@umbc.edu}}
\author[2]{Matthew~Cohen\thanks{matthewcohen@cmu.edu}}
\author[3]{Joshua~Harrington\thanks{joshua.harrington@cedarcrest.edu}}
\author[4]{Joshua~Lowrance\thanks{joshua.lowrance@biola.edu}}
\author[5]{Kaelyn~Pontes\thanks{kaelyn.pontes@hastings.edu}}
\author[6]{Leif~Schaumann\thanks{schaumann1@kenyon.edu}}
\author[8]{Tony~W.~H.~Wong\thanks{wong@kutztown.edu}}
\affil[1]{Department of Mathematics, University of Maryland, Baltimore County}
\affil[2]{Department of Mathematical Sciences, Carnegie Mellon University}
\affil[3]{Department of Mathematics, Cedar Crest College}
\affil[4]{Department of Mathematics and Computer Science, Biola University}
\affil[5]{Department of Mathematics, Hastings College}
\affil[6]{Department of Mathematics, Kenyon College}
\affil[7]{Department of Mathematics, Kutztown University of Pennsylvania}
\date{\today}

\title{A further investigation on covering systems with odd moduli}
\maketitle

\begin{abstract}
Erdős first introduced the idea of covering systems in 1950. Since then, much of the work in this area has concentrated on identifying covering systems that meet specific conditions on their moduli. Among the central open problems in this field is the well-known odd covering problem. In this paper, we investigate a variant of that problem, where one odd integer is permitted to appear multiple times as a modulus in the covering system, while all remaining moduli are distinct odd integers greater than 1.

\noindent\textit{MSC:} 11A07.\\
\textit{Keywords:} covering system, odd covering.
\end{abstract}

\section{Introduction}\label{sec:introduction}
The concept of covering systems of the integers, first introduced by Paul Erdős in 1950 \cite{erdos}, has sparked extensive research over the last 75 years. A \emph{covering system of the integers}, or a \emph{covering system} for short, is a finite collection of congruences where every integer satisfies at least one of the congruences in the set. One of the most notable open questions in this area, posed by Erdős, is whether there exists an odd covering system. An \textit{odd covering system} is a covering system where all moduli are distinct odd integers greater than 1. The question of the existence of such a covering system has become known as \textit{the odd covering problem}. Recent developments on the odd covering problem have shown that if an odd covering system exists, then the least common multiple of its moduli must be divisible by $9$ or $15$ \cite{bbmst}. 

In recent years, variations of the odd covering problem have been explored. We say that a subset of the integers is \emph{covered} by a set of congruences if every element of the set satisfies at least one of the congruences. We say that a subset of the integers has an \emph{odd covering} if it can be covered by a finite set of congruences whose moduli are distinct odd integers greater than 1. An investigation by Filaseta and Harvey in 2018 \cite{fh} showed that each of the following sets has an odd covering: prime numbers, numbers that can be written as the sum of two squares, and Fibonacci numbers.

Another variation on the odd covering problem asks, for a given odd prime $p$, what the smallest nonnegative integer $t_p$ is such that there exists a covering system of the integers where $p$ is a modulus in $t_p$ congruences while all other moduli are distinct odd integers greater than $1$. If an odd covering system exists, then $t_p\leq1$ for all odd primes $p$. Table~\ref{table:original} shows the known bounds for $t_p$ to date.
\begin{table}[H]
\centering
\begin{tabular}{|c|c|c|}
\hline
$p$& Upper bound on $t_p$& Source\\
\hline
$3$& $t_p\leq2=p-1$& \cite{harrington}\\
$5$& $t_p\leq3=p-2$& \cite{hhm}\\
$7$& $t_p\leq4=p-3$& \cite{hsw}\\
$11\leq p\leq19$& $t_p\leq p-4$& \cite{hsw}\\
$p\geq23$& $t_p\leq p-5$& \cite{hsw}\\
\hline
\end{tabular}
\caption{Bounds on $t_p$}
\label{table:original}
\end{table}

In this article, we directly improve some of the bounds provided in Table~\ref{table:original}. Moreover, we loosen the restriction above to consider $t_k$ when $k$ is an odd integer, not necessarily a prime. That is, for an odd integer $k$, we let $t_k$ be the smallest nonnegative integer such that there exists a covering system of the integers where $k$ is a modulus in $t_k$ congruences while all other moduli are distinct odd integers greater than 1. Our main results are summarized by Table~\ref{table:improvedt_kbounds}.
\begin{table}[H]
\centering
\begin{tabular}{|c|c|c|}
\hline
$k$ & Upper bound on $t_{k}$&Source \\
\hline
$9$ & $t_{9} \leq 3$&Theorem~\ref{thm:three9s} \\
$15$ & $t_{15}\leq4$&Theorem~\ref{thm:four15s}\\
$21$ & $t_{21}\leq5$&Theorem~\ref{thm:five21s}\\
$25$ & $t_{25}\leq 8$&Theorem~\ref{thm:eight25s} \\
$49$ & $t_{49}\leq 22$&Theorem~\ref{thm:splittingn}\\
$k=p$ where $p\geq 17$ is prime&$t_k\leq p-5$&Theorem~\ref{thm:p-5}\\
$k=p^2$ where $11\leq p\leq 13$ is prime&$t_k\leq p(p-5)$&Corollary~\ref{cor:improving11and13squared}\\
$k=p^2$ where $p\geq 17$ is prime& $t_{k} \leq p(p - 6)$&Theorems~\ref{thm:splittingn} and \ref{thm:p-5} \\
\hline
\end{tabular}
\caption{Bounds on $t_k$ achieved in this article}
\label{table:improvedt_kbounds}
\end{table}

An immediate application of our results on odd covering systems with repeated moduli is to further Filaseta and Harvey's investigation by demonstrating the existence of an odd covering for various important subsets of the integers. Indeed, if $\mathcal{C}_{k,t}=\{r_i\pmod{k}:1\leq i\leq t\}\cup \mathfrak{C}$ is a covering system, where $\mathfrak{C}$ is a finite collection of congruences with moduli that are distinct odd integers greater than 1 and not equal to $k$, then the set of congruences $\{r_t\pmod{k}\}\cup\mathfrak{C}$ is an odd covering of the set $\{a\in\mathbb{Z}:a\not\equiv b\pmod{k}\text{ for }b\in\{r_1,\ldots,r_{t-1}\}\}$. By extending this idea in Section~\ref{sec:coveringsubsets}, we establish the following theorem.


\begin{theorem}\label{thm:integersequences}
The union of the following subsets of the integers has an odd covering:
\begin{itemize}
\item Sums of two squares (OEIS: A001481),
\item Sums of two cubes (OEIS: A045980),
\item Powerful numbers (OEIS: A001694),
\item Primes and powers of primes (OEIS: A000961),
\item Numbers of derangements (OEIS: A000166),
\item Fermat numbers (OEIS: A000215)
\item Perfect numbers (OEIS: A000396).
\end{itemize}
\end{theorem}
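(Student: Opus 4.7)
The plan is to exploit the behaviour of each listed set modulo $9$ and then invoke Theorem~\ref{thm:three9s}. Writing $U$ for the union of the seven sets, I will prove that $U\cap\{a\in\mathbb{Z}:a\equiv 3\pmod 9\}=\{3\}$ and $U\cap\{a\in\mathbb{Z}:a\equiv 6\pmod 9\}=\{6\}$. The general principle recorded in the paragraph preceding the theorem then converts the covering $\mathcal{C}_{9,3}$ supplied by Theorem~\ref{thm:three9s} into an odd covering of $\{a\in\mathbb{Z}:a\not\equiv 3,6\pmod 9\}$, provided its three mod-$9$ residues can be arranged so that $3$ and $6$ are two of them; two additional congruences with fresh odd prime moduli will absorb the integers $3$ and $6$ themselves.

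The residue analysis is carried out set by set. Since the squares modulo $9$ are $\{0,1,4,7\}$ and the cubes are $\{0,1,8\}$, sums of two squares lie in $\{0,1,2,4,5,7,8\}\pmod 9$ and sums of two cubes in $\{0,1,2,7,8\}\pmod 9$, so neither reaches $3$ or $6$. A powerful number divisible by $3$ is necessarily divisible by $9$, so powerful numbers also avoid $3$ and $6$ modulo $9$. The only prime or prime power divisible by $3$ is $3$ itself, so the set of primes and prime powers meets $\{a\equiv 3\pmod 9\}$ exactly in $\{3\}$ and avoids $\{a\equiv 6\pmod 9\}$ entirely. For a Fermat number $F_n=2^{2^n}+1$ with $n\geq 1$, the facts $2^n\pmod 6\in\{2,4\}$ and $\mathrm{ord}_9(2)=6$ yield $F_n\in\{5,8\}\pmod 9$, leaving $F_0=3$ as the only Fermat number $\equiv 3\pmod 9$. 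For an even perfect number $2^{p-1}(2^p-1)$ with odd prime $p$, a direct computation using $\mathrm{ord}_9(2)=6$ gives $1\pmod 9$; for any odd perfect number, Euler's form $N=p^a m^2$ with $p\equiv 1\pmod 4$ forces $p\neq 3$ and hence $3\mid N\Rightarrow 9\mid N$, so $P_1=6$ is the unique perfect number in $\{3,6\}\pmod 9$. Finally, reducing the recurrence $D_n=(n-1)(D_{n-1}+D_{n-2})$ modulo $9$ turns the sequence into a finite-state process driven by the triple $(D_{n-1},D_n,n\bmod 9)$; a direct calculation of $D_1,\ldots,D_{36}$ exhibits pure periodicity with period $18$ and verifies that none of the $18$ residues in one period equals $3$ or $6$.

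With these facts established, I invoke Theorem~\ref{thm:three9s} to produce a covering $\mathcal{C}_{9,3}=\{r_1,r_2,r_3\pmod 9\}\cup\mathfrak{C}$, and by referring to the explicit construction in the proof of that theorem, or equivalently by applying a uniform shift to every residue in the covering, I arrange $\{r_1,r_2\}=\{3,6\}$. The reduced system $\{r_3\pmod 9\}\cup\mathfrak{C}$ is then an odd covering of $\{a\in\mathbb{Z}:a\not\equiv 3,6\pmod 9\}$, which contains $U\setminus\{3,6\}$. Adjoining the two congruences $x\equiv 3\pmod{m_1}$ and $x\equiv 6\pmod{m_2}$, where $m_1$ and $m_2$ are any two distinct odd primes greater than $1$ that do not already occur as moduli in $\mathfrak{C}\cup\{9\}$, produces the required odd covering of $U$. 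The main obstacle is the residue bookkeeping---in particular, the periodicity calculation for derangements modulo $9$ and the invocation of Euler's structure theorem for odd perfect numbers---together with the secondary need to confirm that the covering delivered by Theorem~\ref{thm:three9s} has two mod-$9$ residues differing by $\pm 3$, so that the shift placing those two residues at $3$ and $6$ is actually available.
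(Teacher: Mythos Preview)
Your proof is correct and follows essentially the same strategy as the paper: verify that each listed set avoids the residues $3$ and $6$ modulo $9$ apart from finitely many exceptions, then use the covering of Theorem~\ref{thm:three9s} (whose three mod-$9$ residues are indeed $0,3,6$) together with extra congruences to pick up the exceptional elements. The residue bookkeeping differs only in minor details---you invoke Euler's form $N=p^{a}m^{2}$ for odd perfect numbers where the paper cites Touchard's theorem, and you handle derangements via periodicity of the recurrence rather than via the closed formula $d_n=n!\sum_{i=0}^n(-1)^i/i!$---and there is one small slip (the assertion that $3$ is the only prime power divisible by $3$ overlooks $9,27,\dots$), but your conclusion there is unaffected since $3^k\equiv 0\pmod 9$ for $k\geq 2$.
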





\section{Main Results}
We begin this section with a lemma derived from a result of Filaseta and Harvey \cite{fh}, which is helpful in simplifying the proofs of various theorems in this article. 

\begin{lemma}\label{lem:coveringshift}
Let $\mathcal{C}=\{r_i\pmod{m_i}:1\leq i\leq\upsilon\}$ be a covering system and let $j$ be an integer. Then $\mathcal{C}'=\{r_i+j\pmod{m_i}:1\leq i\leq\upsilon\}$ is also a covering system.
\end{lemma}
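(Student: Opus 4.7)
The lemma is essentially a translation-invariance statement about covering systems, and the proof should be a short direct argument. The plan is to take an arbitrary integer $n$ and exhibit a congruence in $\mathcal{C}'$ that $n$ satisfies, using the covering property of $\mathcal{C}$ applied to the shifted integer $n-j$.

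More precisely, fix an arbitrary integer $n$. Since $\mathcal{C}$ is a covering system, the integer $n-j$ must satisfy at least one congruence in $\mathcal{C}$, so there exists an index $i$ with $1\leq i\leq\upsilon$ such that $n-j\equiv r_i\pmod{m_i}$. Adding $j$ to both sides yields $n\equiv r_i+j\pmod{m_i}$, which is precisely the $i$-th congruence of $\mathcal{C}'$. Since $n$ was arbitrary, every integer is covered by $\mathcal{C}'$, establishing the lemma.

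There is no real obstacle here: the argument is a single line once one notices that subtracting the shift $j$ converts the problem about $\mathcal{C}'$ into the hypothesis about $\mathcal{C}$. The only thing worth being careful about is keeping track of which direction the shift goes (one applies the hypothesis at $n-j$, not at $n+j$), but this is immediate from the form of the congruences in $\mathcal{C}'$.
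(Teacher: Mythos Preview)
Your argument is correct: applying the covering hypothesis to $n-j$ and then adding $j$ immediately shows that $n$ satisfies a congruence in $\mathcal{C}'$. Note that the paper does not actually supply its own proof of this lemma; it simply states it as derived from a result of Filaseta and Harvey, so your direct one-line verification is exactly what is needed to fill in the omitted justification.
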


The next theorem establishes odd covering systems with repeated moduli based on the existence of others.

\begin{theorem}\label{thm:splittingn}
For any positive integer $t$ and odd integer $k\geq 3$, if there exists a covering system $\mathcal{C}_{k,t}$ with moduli that are odd, greater than 1, and distinct except that the modulus $k$ is used $t$ times, then for any integer $m\geq 2$, there exists a covering system with moduli that are odd, greater than 1, and distinct except that the modulus $km$ is used at most $m(t-1)+1$ times. Further, if $\gcd(k,m)=1$, then there exists a covering system with moduli that are odd, greater than 1, and distinct except that the modulus $km$ is used at most $(m-1)(t-1)+1$ times. In both cases, if $\mathcal{C}_{k,t}$ does not have $km$ as a modulus, then each of these bounds can be reduced by $1$.
\end{theorem}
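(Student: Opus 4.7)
The plan is to transform $\mathcal{C}_{k,t}$ by retaining exactly one of its $t$ congruences with modulus $k$ and splitting each of the remaining $t-1$ such congruences into residue classes modulo $km$. Write $\mathcal{C}_{k,t}=\{r_i\pmod{k}:1\leq i\leq t\}\cup\mathfrak{C}$, where $\mathfrak{C}$ is the collection of congruences in $\mathcal{C}_{k,t}$ whose moduli are odd, greater than $1$, pairwise distinct, and different from $k$. The starting observation is the identity
\[
r_i\pmod{k} \;=\; \bigcup_{j=0}^{m-1}\bigl((r_i+jk)\pmod{km}\bigr),
\]
which allows me to replace any $r_i\pmod{k}$ by $m$ congruences with modulus $km$ without altering which integers it covers.

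For the general bound, I keep $r_1\pmod{k}$, retain $\mathfrak{C}$, and add the $m(t-1)$ congruences obtained by applying the identity to each of $r_2\pmod{k},\ldots,r_t\pmod{k}$. The resulting system is a covering system whose moduli are odd and greater than $1$; the modulus $k$ appears once, each modulus in $\mathfrak{C}$ appears once, and $km$ appears $m(t-1)$ times from the splittings plus at most one more if $km$ already lies in $\mathfrak{C}$. This yields the bound $m(t-1)+1$, reduced by $1$ when $km\notin\mathfrak{C}$.

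For the coprime bound I use the Chinese Remainder Theorem to absorb one class from each splitting into a single congruence modulo $m$. Fix any residue $a\pmod{m}$. Because $\gcd(k,m)=1$, for each $i\in\{2,\ldots,t\}$ there is a unique $j_i^{*}\in\{0,1,\ldots,m-1\}$ with $r_i+j_i^{*}k\equiv a\pmod{m}$, so the class $(r_i+j_i^{*}k)\pmod{km}$ is already contained in $a\pmod{m}$. Hence only the $m-1$ classes with $j\neq j_i^{*}$ need to be included explicitly for each $i\geq 2$. If $\mathfrak{C}$ already contains a congruence with modulus $m$, I take $a$ equal to its residue so that no new modulus $m$ is introduced; otherwise I add $a\pmod{m}$ for any chosen $a$. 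The modulus $km$ then appears $(m-1)(t-1)$ times from the splittings, plus at most one more if $km\in\mathfrak{C}$, yielding the bound $(m-1)(t-1)+1$, again reduced by $1$ when $km\notin\mathfrak{C}$.

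The main obstacle I anticipate is bookkeeping: confirming that all moduli in the resulting systems remain odd, greater than $1$, and distinct apart from the allowed repetitions of $km$. In particular, the construction implicitly requires $m$ to be odd so that $km$ is odd, and in the coprime case I must verify that the single introduced (or re-used) congruence with modulus $m$ does not conflict with any other modulus of $\mathfrak{C}$, and that the absorbed indices $j_i^{*}$ are consistent when two of the $r_i$ happen to coincide modulo $k$. These checks follow routinely from the hypotheses and the uniqueness clause in the Chinese Remainder Theorem.
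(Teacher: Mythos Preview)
Your argument is correct and essentially identical to the paper's: both keep one congruence modulo $k$, split the remaining $t-1$ into $m$ classes modulo $km$, and in the coprime case use a single congruence modulo $m$ (the paper shifts via Lemma~\ref{lem:coveringshift} so this residue is $0$, whereas you choose $a$ to match an existing residue, which is equivalent). The only cosmetic difference is that the paper retains $r_t$ while you retain $r_1$.
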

\begin{proof}
Let $\mathcal{C}_{k,t}=\{r_i\pmod{k}:1\leq i\leq t\}\cup \mathfrak{C}$, where $\mathfrak{C}$ is a collection of congruences with moduli not equal to $k$. Notice that for each $1\leq i\leq t-1$, any integer satisfying $r_i\pmod{k}$ also satisfies $kj+r_i\pmod{km}$ for some $0\leq j\leq m-1$. Therefore, $\mathcal{C}=\{kj+r_i\pmod{km}:1\leq i\leq t-1, 0\leq j\leq m-1\}\cup\{r_t\pmod{k}\}\cup \mathfrak{C}$ is a covering system.

Next, suppose the additional condition $\gcd(k,m)=1$, so that $k$ has a multiplicative inverse modulo $m$. Using Lemma~\ref{lem:coveringshift}, we assume without loss of generality that if $m$ is the modulus of a congruence in $\mathfrak{C}$, then $0\pmod{m}$ is a congruence in $\mathfrak{C}$. Now, $\mathcal{C}=\{0\pmod{m}\}\cup\{kj+r_i\pmod{km}:1\leq i\leq t-1, 0\leq j\leq m-1, j\not\equiv-k^{-1}r_i\pmod{m}\}\cup\{r_t\pmod{k}\}\cup \mathfrak{C}$ is a covering system. In either case, since $\mathfrak{C}$ may have a congruence with modulus $km$, $\mathcal{C}$ is a covering system with moduli that are odd, greater than 1, and distinct except that the modulus $km$ is used at most the desired number of times.
\end{proof}

We note here that the covering systems constructed by Harrington, Sun, and Wong \cite{hsw} to show that $t_p\leq p-4$ for $p\in\{11,13\}$ do not have $p^2$ as a modulus. Consequently, our next corollary follows from Theorem~\ref{thm:splittingn}.

\begin{corollary}\label{cor:improving11and13squared}
For $p\in\{11,13\}$, there exists a covering system of the integers such that all moduli are odd, greater than 1, and distinct except that the modulus $p^2$ is used at most $p(p-5)$ times. 
\end{corollary}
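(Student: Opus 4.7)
The plan is to derive Corollary~\ref{cor:improving11and13squared} as a direct application of Theorem~\ref{thm:splittingn}, feeding in as input the covering systems from \cite{hsw} that witness $t_p \leq p-4$ for $p \in \{11,13\}$.

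First I would invoke the result of Harrington, Sun, and Wong \cite{hsw}: for each $p \in \{11,13\}$, there is a covering system $\mathcal{C}_{p,p-4}$ whose moduli are odd, greater than $1$, and distinct, except that $p$ appears exactly $p-4$ times. Crucially, the paper has just observed that the particular construction in \cite{hsw} avoids $p^2$ as a modulus, so we are in the better of the two cases offered by the final sentence of Theorem~\ref{thm:splittingn}.

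Next I would apply Theorem~\ref{thm:splittingn} with the substitutions $k = p$, $t = p-4$, and $m = p$. Since $\gcd(k,m) = p \neq 1$, we are in the first (non-coprime) case of the theorem, so the resulting covering system has all moduli odd, greater than $1$, and distinct except for the modulus $km = p^2$, which appears at most $m(t-1)+1 = p(p-5)+1$ times. Because $\mathcal{C}_{p,p-4}$ does not use $p^2$, the theorem's concluding clause allows us to subtract $1$, yielding at most $p(p-5)$ occurrences of the modulus $p^2$, which is exactly the claimed bound.

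There is essentially no obstacle here beyond verifying that the two hypotheses of the optimal case of Theorem~\ref{thm:splittingn} are actually satisfied, namely that the covering system from \cite{hsw} genuinely uses $p$ as a modulus $p-4$ times and does not contain $p^2$ among its moduli; both are stated in the preceding remark. The arithmetic $m(t-1)+1-1 = p(p-5)$ is immediate, so the corollary follows at once.
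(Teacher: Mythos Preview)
Your proposal is correct and follows exactly the paper's approach: the paper simply notes that the \cite{hsw} covering systems for $p\in\{11,13\}$ avoid $p^2$ and then invokes Theorem~\ref{thm:splittingn}, which is precisely what you have spelled out, including the arithmetic $m(t-1)+1-1=p(p-5)$ in the non-coprime case.
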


Theorem~\ref{thm:splittingn} provides bounds on $t_k$ for all composite $k$ using the results in Table~\ref{table:original}. We will see that, in certain cases, these bounds can be improved by direct construction of such covering systems.


To present these covering systems, we adopt the tree diagram notation established by Harrington, Sun, and Wong \cite{hsw} with minor adjustments and additions. Recall that when we write $\{m_1,m_2,\dotsc,m_\ell\}\times m_0$, it indicates a list of $2^\ell$ moduli, given by the product of any subset of $\{m_1,m_2,\dotsc,m_\ell\}$ together with $m_0$. In this paper, these moduli are sorted from left to right as follows.
$$\footnotesize\begin{tabular}{rrrr}
$m_0$,& $m_1\times m_0$,& $m_2\times m_0$,& $m_1\times m_2\times m_0$,\\
$m_3\times m_0$,& $m_1\times m_3\times m_0$,& $m_2\times m_3\times m_0$,& $m_1\times m_2\times m_3\times m_0$,\\
\vdots& \vdots& \vdots& \vdots\\
$m_3\times\dotsb m_\ell\times m_0$,&  $m_1\times m_3\times\dotsb m_\ell\times m_0$,& $m_2\times m_3\times\dotsb m_\ell\times m_0$,& $m_1\times m_2\times m_3\times\dotsb m_\ell\times m_0$.
\end{tabular}$$
Under some wedges, we have new notation $[m_1^\alpha]\times\{m_2,m_3,\dotsc,m_\ell\}\times m_0$. This indicates a list of $(\alpha+1)2^{\ell-1}$ moduli, given by the product of the following: an element in $\{1,m_1,m_1^2,\dotsc,m_1^\alpha\}$, a subset of $\{m_2,m_3,\dotsc,m_\ell\}$, and $m_0$. These moduli are sorted from left to right as follows.
$$\footnotesize\begin{tabular}{rrrr}
$m_0$,& $m_1\times m_0$,& $\dotsc$,& $m_1^\alpha\times m_0$,\\
$m_2\times m_0$,& $m_1\times m_2\times m_0$,& $\dotsc$,& $m_1^\alpha\times m_2\times m_0$,\\
$m_3\times m_0$,& $m_1\times m_3\times m_0$,& $\dotsc$,& $m_1^\alpha\times m_3\times m_0$,\\
$m_2\times m_3\times m_0$,& $m_1\times m_2\times m_3\times m_0$,& $\dotsc$,& $m_1^\alpha\times m_2\times m_3\times m_0$,\\
\vdots& \vdots& & \vdots\\
$m_2\times m_3\dotsb\times m_\ell\times m_0$,& $m_1\times m_2\times m_3\dotsb\times m_\ell\times m_0$,& $\dotsc$,& $m_1^\alpha\times m_2\times m_3\dotsb\times m_\ell\times m_0$.
\end{tabular}$$

Our next theorem is an improvement to the second last row of Table~\ref{table:original}.
\begin{theorem}\label{thm:p-5}
Let $p\geq 17$ be a prime. There exists a covering system of the integers such that all moduli are odd, greater than 1, and distinct except that the modulus $p$ is used $p-5$ times. Further, the covering system does not have $p^2$ as a modulus.
\end{theorem}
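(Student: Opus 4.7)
The plan is to exhibit, for each prime $p\geq 17$, an explicit covering system presented as a wedge-tree diagram in the notation introduced above, whose modulus multiset consists of exactly $p-5$ copies of $p$ together with distinct odd integers greater than $1$, none equal to $p^{2}$. Since Harrington, Sun, and Wong already established $t_p \leq p-5$ for $p\geq 23$ in \cite{hsw}, the main work is twofold: (a) handle the two new cases $p\in\{17,19\}$ directly, and (b) either verify that the existing construction avoids $p^{2}$ or modify it so that it does. This refinement is essential because Theorem~\ref{thm:p-5} is used in tandem with Theorem~\ref{thm:splittingn} (via the ``$km$ is not a modulus'' clause) to obtain the bound on $t_{p^{2}}$ recorded in Table~\ref{table:improvedt_kbounds}.

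The construction strategy is to fix a small set of auxiliary odd primes and prime powers drawn from $\{3,5,7,9,11,13,\dotsc\}$, and build a base tree of products of these whose total density falls just short of $1$. On the uncovered residue classes left over one attaches congruences of the form $r\pmod{m\cdot p}$, where $m$ is a base product, chosen so that exactly five residue classes modulo $p$ arise among these. The remaining $p-5$ residues modulo $p$ are then absorbed by $p-5$ individual congruences with modulus $p$. By never letting $p$ appear twice as a factor in any single modulus, the construction automatically avoids $p^{2}$. Verification then decomposes into the standard steps: (i) show the tree describes a covering, by induction on its depth, so that every integer falls in at least one congruence; (ii) count the multiplicity of $p$ as a modulus; and (iii) check that the remaining moduli are odd, greater than $1$, pairwise distinct, and different from $p^{2}$.

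The main obstacle is case (a) for $p\in\{17,19\}$, where the tree is tight in the sense that the total density of moduli carrying the factor $p$ must equal exactly $5/p$, while simultaneously all such products must be pairwise distinct and avoid $p^{2}$. Exhibiting a compatible residue assignment is a finite combinatorial task that I expect will dominate the length of the proof, and it will likely force the inclusion of moduli involving $9$, $11$, and $13$ rather than just $3$, $5$, $7$. For $p\geq 23$ there is ample slack, so adapting the construction of \cite{hsw} — with the single additional constraint that $p$ appears at most once in any modulus — should yield the result with essentially the same analysis.
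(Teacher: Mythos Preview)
Your proposal outlines a plausible strategy but defers all of the actual content: no tree is exhibited, no residues are assigned, and the verification steps (i)--(iii) are stated but not executed. As written there is nothing to check. More importantly, you are missing the structural shortcut that makes the paper's proof short. Rather than handling $p\in\{17,19\}$ by bespoke constructions and then separately revisiting the $p\geq 23$ argument from \cite{hsw}, the paper produces a \emph{single} tree diagram parametrized by $p$---with $p$ appearing only to the first power in moduli such as $3p$, $5p$, $7p$, $11p$, $13p$, and in the ``$p-5$ branches'' wedge---verifies it once at $p=17$, and then invokes Lemma~5.1 of \cite{hsw} to conclude for every prime $p\geq 17$. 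That lemma is precisely the mechanism that propagates a covering-tree template from the smallest admissible prime to all larger ones, so no separate treatment of $p=19$ or of $p\geq 23$ is needed.

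Your intuition about $p^2$-avoidance is correct and matches the paper: since $p$ enters each modulus linearly in the template, no modulus is divisible by $p^2$, and this observation is uniform in $p$. But your three-case plan, while not wrong in principle, triples the work and overlooks the lemma that collapses the cases. If you want to complete the argument along the paper's lines, the concrete task is to write down one parametrized tree (using auxiliary primes through $31$ and a terminal prime $q>31$), check it covers at $p=17$, and cite Lemma~5.1 of \cite{hsw}.
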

\begin{proof}
A tree diagram of such a covering system when $p=17$ is given by Figures~\ref{fig:p-5ps_second}-\ref{fig:p-5psT5_second}, with the main tree in Figure~\ref{fig:p-5ps_second} and subtrees in Figures\ref{fig:p-5psT1_second}-\ref{fig:p-5psT5_second}. Here, $q>31$ is a prime. Since no modulus in this covering system is divisible by $p^2$, our proof is completed by applying Lemma~5.1 of Harrington, Sun, and Wong \cite{hsw}.
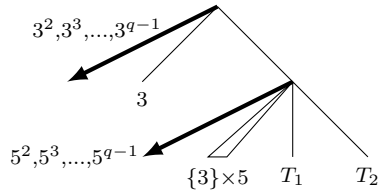
\begin{figure}[H]
\centering
\begin{tikzpicture}[scale=1]
\draw[ultra thick,-latex](0,0)--(-2,-1);\node[above]at(-1.6,-0.6){$\substack{3^2,3^3,\dotsc,3^{q-1}}$};
\draw(0,0)--(-1,-1);\node[below]at(-1,-1){$\substack{3}$};
\draw(0,0)--(1,-1);

\draw[ultra thick,-latex](1,-1)--(-1,-2);\node at(-1.9,-2){$\substack{5^2,5^3,\dotsc,5^{q-1}}$};
\draw(1,-1)--(-0.125,-2)--(0.125,-2)--(1,-1);\node[below]at(0,-2){$\substack{\{3\}\times5}$};

\draw(1,-1)--(1,-2);\node[below]at(1,-2){$\substack{T_1}$};
\draw(1,-1)--(2,-2);\node[below]at(2,-2){$\substack{T_2}$};
\end{tikzpicture}
\caption{An odd covering system with the modulus $p$ ($p\geq17$) used exactly $p-5$ times}
\label{fig:p-5ps_second}
\end{figure}

\begin{figure}[H]
\centering
\begin{tikzpicture}[scale=1]
\draw(0,0)--(-7.3,-1)--(-6.7,-1)--(0,0);\node[above]at(-5.5,-0.8){$\substack{p,p,\dotsc,p\\p-5\text{ branches}}$};
\draw(0,0)--(-6,-1);\node[below]at(-6.2,-1){$\substack{3\times p}$};

\draw(0,0)--(-5,-1);\draw[ultra thick,-latex](-5,-1)--(-7.1,-2);\node[above]at(-7.1,-2){$\substack{7^2,7^3,\\\dotsc,7^{q-1}}$};
\draw(-5,-1)--(-6.725,-2)--(-6.475,-2)--(-5,-1);\node[below]at(-6.7,-2){$\substack{\{3,5\}\\\times7}$};
\draw(-5,-1)--(-6,-2);\node[below]at(-6,-2){$\substack{5\times p\\\times7}$};

\draw(-5,-1)--(-4.5,-2);\draw[ultra thick,-latex](-4.5,-2)--(-6.5,-3);\node at(-7,-3.2){$\substack{11^2,11^3,\\\dotsc,11^{q-1}}$};
\draw(-4.5,-2)--(-6.125,-3)--(-5.875,-3)--(-4.5,-2);\node[below]at(-6,-3){$\substack{\{3,7\}\\\times5\\\times11}$};
\draw(-4.5,-2)--(-5.325,-3)--(-5.075,-3)--(-4.5,-2);\node[below]at(-5.2,-3){$\substack{\{3\}\\\times7\\\times11}$};
\draw(-4.5,-2)--(-4.525,-3)--(-4.275,-3)--(-4.5,-2);\node[below]at(-4.4,-3){$\substack{\{3\}\\\times11}$};
\draw(-4.5,-2)--(-3.725,-3)--(-3.475,-3)--(-4.5,-2);\node[below]at(-3.6,-3){$\substack{\{3\}\\\times5\\\times p\times11}$};

\draw(0,0)--(-2,-1);\draw[ultra thick,-latex](-2,-1)--(-4,-2);\node[above]at(-3.6,-1.6){$\substack{7^2,7^3,\dotsc,7^{q-1}}$};
\draw(-2,-1)--(-3.625,-2)--(-3.375,-2)--(-2,-1);\node[below]at(-3.5,-2){$\substack{\{3,5\}\\\times7}$};
\draw(-2,-1)--(-2.7,-2);\node[below]at(-2.7,-2){$\substack{3\times5\\\times p\times7}$};

\draw(-2,-1)--(-1.5,-2);\draw[ultra thick,-latex](-1.5,-2)--(-2.1,-3);\node at(-2.6,-3.2){$\substack{11^2,11^3,\\\dotsc,11^{q-1}}$};
\draw(-1.5,-2)--(-1.725,-3)--(-1.475,-3)--(-1.5,-2);\node[below]at(-1.6,-3){$\substack{\{3,7\}\\\times5\\\times11}$};
\draw(-1.5,-2)--(-0.925,-3)--(-0.675,-3)--(-1.5,-2);\node[below]at(-0.8,-3){$\substack{\{3\}\\\times7\\\times11}$};
\draw(-1.5,-2)--(-0.125,-3)--(0.125,-3)--(-1.5,-2);\node[below]at(0,-3){$\substack{\{3\}\\\times11}$};
\draw(-1.5,-2)--(0.675,-3)--(0.925,-3)--(-1.5,-2);\node[below]at(0.8,-3){$\substack{\{3\}\\\times5\times p\\\times7\times11}$};

\draw(0,0)--(1,-1);\draw[ultra thick,-latex](1,-1)--(-0.5,-2);\node[above]at(-0.4,-1.6){$\substack{7^2,7^3,\dotsc,7^{q-1}}$};
\draw(1,-1)--(0.275,-2)--(0.525,-2)--(1,-1);\node[below]at(0.4,-2){$\substack{\{3,5\}\\\times7}$};
\draw(1,-1)--(1.2,-2);\node[below]at(1.2,-2){$\substack{p\times7}$};

\draw(1,-1)--(3.3,-2);\draw[ultra thick,-latex](3.3,-2)--(1.4,-3);\node at(2.2,-2.1){$\substack{11^2,11^3,\\\dotsc,11^{q-1}}$};
\draw(3.3,-2)--(1.875,-3)--(2.125,-3)--(3.3,-2);\node[below]at(2,-3){$\substack{\{3,7\}\\\times5\\\times11}$};
\draw(3.3,-2)--(2.675,-3)--(2.925,-3)--(3.3,-2);\node[below]at(2.8,-3){$\substack{\{3\}\\\times7\\\times11}$};
\draw(3.3,-2)--(3.475,-3)--(3.725,-3)--(3.3,-2);\node[below]at(3.6,-3){$\substack{\{3\}\\\times11}$};
\draw(3.3,-2)--(4.275,-3)--(4.525,-3)--(3.3,-2);\node[below]at(4.4,-3){$\substack{\{3\}\\\times p\\\times7\times11}$};

\draw(0,0)--(4.9,-1);\draw[ultra thick,-latex](4.9,-1)--(3.8,-2);\node[above]at(3.6,-1.6){$\substack{7^2,7^3,\dotsc,7^{q-1}}$};
\draw(4.9,-1)--(4.275,-2)--(4.525,-2)--(4.9,-1);\node[below]at(4.4,-2){$\substack{\{3,5\}\\\times7}$};
\draw(4.9,-1)--(5.2,-2);\node[below]at(5.2,-2){$\substack{3\times p\\\times7}$};

\draw(4.9,-1)--(7.3,-2);\draw[ultra thick,-latex](7.3,-2)--(5.4,-3);\node at(6.2,-2.1){$\substack{11^2,11^3,\\\dotsc,11^{q-1}}$};
\draw(7.3,-2)--(5.875,-3)--(6.125,-3)--(7.3,-2);\node[below]at(6,-3){$\substack{\{3,7\}\\\times5\\\times11}$};
\draw(7.3,-2)--(6.675,-3)--(6.925,-3)--(7.3,-2);\node[below]at(6.8,-3){$\substack{\{3\}\\\times7\\\times11}$};
\draw(7.3,-2)--(7.475,-3)--(7.725,-3)--(7.3,-2);\node[below]at(7.6,-3){$\substack{\{3\}\\\times11}$};
\draw(7.3,-2)--(8.275,-3)--(8.525,-3)--(7.3,-2);\node[below]at(8.4,-3){$\substack{\{3\}\\\times p\\\times11}$};
\end{tikzpicture}
\caption{$T_1$ in Figure~\ref{fig:p-5ps_second}}
\label{fig:p-5psT1_second}
\end{figure}

\begin{figure}[H]
\centering
\begin{tikzpicture}[scale=1]
\draw(0,0)--(-7.3,-1)--(-6.7,-1)--(0,0);\node[above]at(-5.5,-0.8){$\substack{p,p,\dotsc,p\\p-5\text{ branches}}$};
\draw(0,0)--(-6,-1);\node[below]at(-6.2,-1){$\substack{3\times p}$};
\draw(0,0)--(-5.3,-1);\node[below]at(-5.4,-1){$\substack{5\times p}$};
\draw(0,0)--(-4.4,-1);\node[below]at(-4.4,-1){$\substack{3\times5\times p}$};

\draw(0,0)--(-2.2,-1);\draw[ultra thick,-latex](-2.2,-1)--(-6,-2);\node at(-6.5,-2){$\substack{7^2,7^3,\\\dotsc,7^{q-1}}$};
\draw(-2.2,-1)--(-5.5,-2)--(-5.1,-2)--(-2.2,-1);\node[below]at(-5.5,-2){$\substack{\{3\}\times7}$};

\draw(-2.2,-1)--(-4.2,-2);\draw[ultra thick,-latex](-4.2,-2)--(-6.2,-3);\node at(-6.8,-3){$\substack{13^2,13^3,\\\dotsc,13^{q-1}}$};
\draw(-4.2,-2)--(-5.825,-3)--(-5.575,-3)--(-4.2,-2);\node[below]at(-5.7,-3){$\substack{\{3,5,p\}\\\times13}$};
\draw(-4.2,-2)--(-4.825,-3)--(-4.575,-3)--(-4.2,-2);\node[below]at(-4.7,-3){$\substack{\{3,5\}\\\times7\times13}$};

\draw(-2.2,-1)--(-2.1,-2);\draw[ultra thick,-latex](-2.1,-2)--(-4.2,-3);\node at(-3.3,-2.1){$\substack{13^2,13^3,\\\dotsc,13^{q-1}}$};
\draw(-2.1,-2)--(-3.725,-3)--(-3.475,-3)--(-2.1,-2);\node[below]at(-3.6,-3){$\substack{\{3,5,p\}\\\times13}$};
\draw(-2.1,-2)--(-2.725,-3)--(-2.475,-3)--(-2.1,-2);\node[below]at(-2.6,-3){$\substack{\{3,5\}\\\times p\times7\\\times13}$};

\draw(-2.2,-1)--(-1.7,-2);\node[below]at(-1.7,-2){$\substack{p\times7}$};
\draw(-2.2,-1)--(-1,-2);\node[below]at(-1,-2){$\substack{T_3}$};

\draw(0,0)--(3.5,-1);\draw[ultra thick,-latex](3.5,-1)--(-0.5,-2);\node[above]at(1,-1.6){$\substack{7^2,7^3,\dotsc,7^{q-1}}$};
\draw(3.5,-1)--(0,-2)--(0.4,-2)--(3.5,-1);\node[below]at(0.1,-2){$\substack{\{3\}\times7}$};
\draw(3.5,-1)--(0.9,-2);\node[below]at(0.9,-2){$\substack{T_4}$};

\draw(3.5,-1)--(2,-2);\draw[ultra thick,-latex](2,-2)--(-0.1,-3);\node at(-0.7,-3){$\substack{19^2,19^3,\\\dotsc,19^{q-1}}$};
\draw(2,-2)--(0.475,-3)--(0.725,-3)--(2,-2);\node[below]at(0.6,-3){$\substack{\{3,5,p,7\}\\\times19}$};
\draw(2,-2)--(2,-3);\draw[ultra thick,-latex](2,-3)--(0.5,-4);\node at(-0.6,-4){$\substack{13^2,13^3,\dotsc,13^{q-1}}$};
\draw(2,-3)--(0.875,-4)--(1.125,-4)--(2,-3);\node[below]at(1,-4){$\substack{\{3,5\}\\\times13}$};
\draw(2,-3)--(1.875,-4)--(2.125,-4)--(2,-3);\node[below]at(2,-4){$\substack{\{3,5,p\}\\\times19\times13}$};
\draw(2,-2)--(4,-3);\draw[ultra thick,-latex](4,-3)--(2.5,-4);\node[above]at(2.9,-3.6){$\substack{13^2,13^3,\\\dotsc,13^{q-1}}$};
\draw(4,-3)--(3.075,-4)--(3.375,-4)--(4,-3);\node[below]at(3.2,-4){$\substack{\{3,5\}\\\times13}$};
\draw(4,-3)--(4.075,-4)--(4.325,-4)--(4,-3);\node[below]at(4.2,-4){$\substack{\{3,5,p\}\\\times7\times19\\\times13}$};

\draw(3.5,-1)--(3.5,-2);\node[below]at(3.5,-2){$\substack{3\times p\times7}$};
\draw(3.5,-1)--(4.5,-2);\node[below]at(4.5,-2){$\substack{T_5}$};
\end{tikzpicture}
\caption{$T_2$ in Figure~\ref{fig:p-5ps_second}}
\label{fig:p-5psT2_second}
\end{figure}

\begin{figure}[H]
\centering
\begin{tikzpicture}[scale=0.98]
\draw[ultra thick,-latex](0,0)--(-8,-1);\node[above]at(-8,-0.9){$\substack{11^2,11^3,\dotsc,11^{q-1}}$};
\draw(0,0)--(4.4,-1)--(4.8,-1)--(0,0);\node[below]at(4.7,-1){$\substack{\{3\}\\\times7\times11}$};
\draw(0,0)--(5.25,-1)--(5.75,-1)--(0,0);\node[below]at(5.7,-1){$\substack{\{3\}\times11}$};
\draw(0,0)--(6.25,-1)--(6.75,-1)--(0,0);\node[above]at(6.2,-0.9){$\substack{\{3\}\times p\times7\times11}$};
\draw(0,0)--(-6,-1)--(-7.625,-2)--(-7.375,-2)--(-6,-1)--(-6.425,-2)--(-6.175,-2)--(-6,-1);\draw[ultra thick,-latex](-6,-1)--(-8.3,-2);
\node[above]at(-8.1,-1.6){$\substack{13^2,13^3,\dotsc,13^{q-1}}$};\node[below]at(-7.5,-2){$\substack{\{3,5,p\}\\\times13}$};\node[below]at(-6.3,-2){$\substack{\{3,5\}\\\times11\times13}$};

\draw(0,0)--(-2.8,-1)--(-4.425,-2)--(-4.175,-2)--(-2.8,-1)--(-3.025,-2)--(-2.775,-2)--(-2.8,-1);\draw[ultra thick,-latex](-2.8,-1)--(-5.2,-2);
\node[above]at(-4.8,-1.6){$\substack{13^2,13^3,\dotsc,13^{q-1}}$};\node[below]at(-4.3,-2){$\substack{\{3,5,p\}\\\times13}$};\node[below]at(-2.9,-2){$\substack{\{3,5\}\\\times p\times11\times13}$};

\draw(0,0)--(0.5,-1)--(-1.125,-2)--(-0.875,-2)--(0.5,-1)--(0.275,-2)--(0.525,-2)--(0.5,-1);\draw[ultra thick,-latex](0.5,-1)--(-1.8,-2);
\node[above]at(-1.5,-1.6){$\substack{13^2,13^3,\dotsc,13^{q-1}}$};\node[below]at(-1,-2){$\substack{\{3,5,p\}\\\times13}$};\node[below]at(0.4,-2){$\substack{\{3,5\}\\\times7\times11\times13}$};

\draw(0,0)--(3.6,-1)--(2.275,-2)--(2.525,-2)--(3.6,-1)--(3.875,-2)--(4.125,-2)--(3.6,-1);\draw[ultra thick,-latex](3.6,-1)--(1.8,-2);
\node[above]at(1.8,-1.6){$\substack{13^2,13^3,\dotsc,13^{q-1}}$};\node[below]at(2.4,-2){$\substack{\{3,5,p\}\\\times13}$};\node[below]at(4,-2){$\substack{\{3,5\}\\\times p\times7\times11\times13}$};
\end{tikzpicture}
\caption{$T_3$ in Figure~\ref{fig:p-5psT2_second}}
\label{fig:p-5psT3_second}
\end{figure}

\begin{figure}[H]
\centering
\begin{tikzpicture}[scale=0.95]
\draw[ultra thick,-latex](0,0)--(-6,-1);\node[above]at(-4.6,-0.7){$\substack{23^2,23^3,\dotsc,23^{q-1}}$};
\draw(0,0)--(-5.2,-1)--(-4.8,-1)--(0,0);\node[below]at(-5.2,-1){$\substack{\{3,5,p,7\}\times23}$};

\draw(0,0)--(-3.5,-1);\draw[ultra thick,-latex](-3.5,-1)--(-5,-2);\node at(-5.6,-2){$\substack{11^2,11^3,\\\dotsc,11^{q-1}}$};
\draw(-3.5,-1)--(-4.525,-2)--(-4.275,-2)--(-3.5,-1);\node[below]at(-4.4,-2){$\substack{\{3,5,p\}\\\times23\times11}$};
\draw(-3.5,-1)--(-3.425,-2)--(-3.175,-2)--(-3.5,-1);\node[below]at(-3.3,-2){$\substack{\{3,p\}\\\times11}$};

\draw(0,0)--(-1.6,-1);\draw[ultra thick,-latex](-1.6,-1)--(-2.7,-2);\node[above]at(-2.7,-1.7){$\substack{11^2,11^3,\\\dotsc,11^{q-1}}$};
\draw(-1.6,-1)--(-2.225,-2)--(-1.975,-2)--(-1.6,-1);\node[below]at(-2.1,-2){$\substack{\{3,5,p\}\\\times7\times23\\\times11}$};
\draw(-1.6,-1)--(-1.125,-2)--(-0.875,-2)--(-1.6,-1);\node[below]at(-1,-2){$\substack{\{3,p\}\\\times11}$};

\draw(0,0)--(0.3,-1);\draw[ultra thick,-latex](0.3,-1)--(-0.5,-2);\node[above]at(-0.6,-1.6){$\substack{13^2,13^3,\\\dotsc,13^{q-1}}$};
\draw(0.3,-1)--(-0.125,-2)--(0.125,-2)--(0.3,-1);\node[below]at(0,-2){$\substack{\{3,5\}\\\times13}$};
\draw(0.3,-1)--(0.675,-2)--(0.925,-2)--(0.3,-1);\node[below]at(0.8,-2){$\substack{\{3,5\}\\\times23\\\times13}$};
\draw(0.3,-1)--(1.475,-2)--(1.725,-2)--(0.3,-1);\node[below]at(1.6,-2){$\substack{\{3,5\}\\\times7\\\times13}$};

\draw(0,0)--(2.5,-1);\draw[ultra thick,-latex](2.5,-1)--(2.1,-2);\node[above]at(1.7,-1.7){$\substack{13^2,13^3,\\\dotsc,13^{q-1}}$};
\draw(2.5,-1)--(2.575,-2)--(2.825,-2)--(2.5,-1);\node[below]at(2.7,-2){$\substack{\{3,5\}\\\times13}$};
\draw(2.5,-1)--(3.375,-2)--(3.625,-2)--(2.5,-1);\node[below]at(3.5,-2){$\substack{\{3,5\}\\\times p\\\times23\\\times13}$};
\draw(2.5,-1)--(4.175,-2)--(4.425,-2)--(2.5,-1);\node[below]at(4.3,-2){$\substack{\{3,5\}\\\times7\\\times13}$};

\draw(0,0)--(5.5,-1);\draw[ultra thick,-latex](5.5,-1)--(4.8,-2);\node[above]at(4.5,-1.8){$\substack{13^2,13^3,\\\dotsc,13^{q-1}}$};
\draw(5.5,-1)--(5.175,-2)--(5.425,-2)--(5.5,-1);\node[below]at(5.3,-2){$\substack{\{3,5\}\\\times13}$};
\draw(5.5,-1)--(5.975,-2)--(6.225,-2)--(5.5,-1);\node[below]at(6.1,-2){$\substack{\{3,5\}\\\times7\\\times23\\\times13}$};
\draw(5.5,-1)--(6.775,-2)--(7.025,-2)--(5.5,-1);\node[below]at(6.9,-2){$\substack{\{3,5\}\\\times7\\\times13}$};

\draw(0,0)--(8.5,-1);\draw[ultra thick,-latex](8.5,-1)--(7.3,-2);\node[above]at(7.2,-1.8){$\substack{13^2,13^3,\\\dotsc,13^{q-1}}$};
\draw(8.5,-1)--(7.675,-2)--(7.925,-2)--(8.5,-1);\node[below]at(7.8,-2){$\substack{\{3,5\}\\\times13}$};
\draw(8.5,-1)--(8.575,-2)--(8.825,-2)--(8.5,-1);\node[below]at(8.7,-2){$\substack{\{3,5\}\\\times p\times7\\\times23\\\times13}$};
\draw(8.5,-1)--(9.375,-2)--(9.625,-2)--(8.5,-1);\node[below]at(9.5,-2){$\substack{\{3,5\}\\\times7\\\times13}$};
\end{tikzpicture}
\caption{$T_4$ in Figure~\ref{fig:p-5psT2_second}}
\label{fig:p-5psT4_second}
\end{figure}

\begin{figure}[H]
\centering
\begin{tikzpicture}[scale=0.95]
\draw[ultra thick,-latex](0,0)--(-8.3,-1);\node[above]at(-6,-0.7){$\substack{11^2,11^3,\dotsc,11^{q-1}}$};
\draw(0,0)--(5.25,-1)--(5.75,-1)--(0,0);\node[below]at(5.7,-1){$\substack{\{3\}\times7\times11}$};
\draw(0,0)--(6.25,-1)--(6.75,-1)--(0,0);\node[above]at(6.2,-0.9){$\substack{\{3,p\}\times11}$};
\draw(0,0)--(-7,-1)--(-8.325,-2)--(-8.075,-2)--(-7,-1);\draw[ultra thick,-latex](-7,-1)--(-8.8,-2);
\node[above]at(-8.8,-1.9){$\substack{29^2,29^3,\\\dotsc,29^{q-1}}$};\node[below]at(-8.2,-2){$\substack{\{3,5,p,7,11\}\\\times29\\28\text{ branches}}$};

\draw(0,0)--(-5,-1)--(-5.925,-2)--(-5.675,-2)--(-5,-1);\draw[ultra thick,-latex](-5,-1)--(-6.4,-2);
\node[above]at(-6.6,-2){$\substack{31^2,31^3,\\\dotsc,31^{q-1}}$};\node[below]at(-5.8,-2){$\substack{\{3,5,p,7,11\}\\\times31\\30\text{ branches}}$};

\draw(0,0)--(-3,-1)--(-4.225,-2)--(-3.975,-2)--(-3,-1)--(-3.125,-2)--(-2.875,-2)--(-3,-1);\draw[ultra thick,-latex](-3,-1)--(-4.6,-2);
\node[above]at(-4.4,-1.8){$\substack{19^2,19^3,\\\dotsc,19^{q-1}}$};\node[below]at(-4.1,-2){$\substack{\{3,5,p\}\\\times19}$};\node[below]at(-3,-2){$\substack{\{3,5,p\}\\\times11\times19}$};

\draw(-3,-1)--(-1.5,-2);
\draw[ultra thick,-latex](-1.5,-2)--(-2.5,-3);\node at(-3.6,-3){$\substack{13^2,13^3,\dotsc,13^{q-1}}$};
\draw(-1.5,-2)--(-2.125,-3)--(-1.875,-3)--(-1.5,-2);\node[below]at(-2,-3){$\substack{\{3,5\}\\\times13}$};
\draw(-1.5,-2)--(-1.225,-3)--(-0.975,-3)--(-1.5,-2);\node[below]at(-1,-3){$\substack{\{3,5,p\}\\\times19\\\times13}$};

\draw(-3,-1)--(0.2,-2);
\draw[ultra thick,-latex](0.2,-2)--(0,-3);\node[above]at(-0.5,-2.8){$\substack{13^2,13^3,\\\dotsc,13^{q-1}}$};
\draw(0.2,-2)--(0.275,-3)--(0.525,-3)--(0.2,-2);\node[below]at(0.4,-3){$\substack{\{3,5\}\\\times13}$};
\draw(0.2,-2)--(1.175,-3)--(1.425,-3)--(0.2,-2);\node[below]at(1.4,-3){$\substack{\{3,5,p\}\\\times11\\\times19\\\times13}$};

\draw(0,0)--(3,-1)--(1.275,-2)--(1.525,-2)--(3,-1)--(2.375,-2)--(2.625,-2)--(3,-1);\draw[ultra thick,-latex](3,-1)--(0.8,-2);
\node[above]at(1.1,-1.6){$\substack{19^2,19^3,\dotsc,19^{q-1}}$};\node[below]at(1.4,-2){$\substack{\{3,5,p\}\\\times19}$};\node[below]at(2.5,-2){$\substack{\{3,5,p\}\\\times7\times11\\\times19}$};

\draw(3,-1)--(4.5,-2);\draw[ultra thick,-latex](4.5,-2)--(3.1,-3);\node[above]at(3.6,-2.5){$\substack{13^2,13^3,\\\dotsc,13^{q-1}}$};
\draw(4.5,-2)--(3.475,-3)--(3.725,-3)--(4.5,-2);\node[below]at(3.6,-3){$\substack{\{3,5\}\\\times13}$};
\draw(4.5,-2)--(4.475,-3)--(4.725,-3)--(4.5,-2);\node[below]at(4.6,-3){$\substack{\{3,5,p\}\\\times19\\\times13}$};

\draw(3,-1)--(6.5,-2);\draw[ultra thick,-latex](6.5,-2)--(5.1,-3);\node[above]at(5.3,-2.7){$\substack{13^2,13^3,\\\dotsc,13^{q-1}}$};
\draw(6.5,-2)--(5.475,-3)--(5.725,-3)--(6.5,-2);\node[below]at(5.6,-3){$\substack{\{3,5\}\\\times13}$};
\draw(6.5,-2)--(6.475,-3)--(6.725,-3)--(6.5,-2);\node[below]at(6.6,-3){$\substack{\{3,5,p\}\\\times7\times11\\\times19\\\times13}$};
\end{tikzpicture}
\caption{$T_5$ in Figure~\ref{fig:p-5psT2_second}}
\label{fig:p-5psT5_second}
\end{figure}
\end{proof}

Our last four theorems of this section establish $t_9\leq3$, $t_{15}\leq4$, $t_{21}\leq5$, and $t_{25}\leq8$, respectively.
\begin{theorem}\label{thm:three9s}
There exists a covering system of the integers such that all moduli are odd, greater than 1, and distinct except that the modulus $9$ is used three times. 
\end{theorem}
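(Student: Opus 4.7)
Applying Theorem~\ref{thm:splittingn} with $k=3$ and $m=3$ to the bound $t_3\le 2$ from \cite{harrington} already yields $t_9 \le 3(2-1)+1 = 4$. To save one further copy of the modulus $9$ and reach $t_9\le 3$, the plan is to construct an odd covering system directly, presenting it as a tree diagram in the style of Figures~\ref{fig:p-5ps_second}--\ref{fig:p-5psT5_second}.

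First, using Lemma~\ref{lem:coveringshift} I would fix the residues of the three mod-$9$ congruences freely. A natural opening move is to combine these with a single congruence of modulus $3$ so that together they cover exactly six of the nine residue classes modulo $9$. For instance, the four congruences $0\pmod 3$ together with $1,2,5\pmod 9$ cover $\{0,1,2,3,5,6\}\pmod 9$ and leave only $\{4,7,8\}\pmod 9$ uncovered. After this step the remaining task is to cover these three residue classes modulo $9$ using distinct odd moduli, each greater than $1$ and different from $3$ and $9$.

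Second, I would split the residual uncovered integers by branching on the next available small odd primes---$5$, $7$, $11$, $13$---and incorporating composite moduli such as $3\cdot 5$, $3\cdot 7$, $5\cdot 7$, and $3\cdot 5\cdot 7$ as needed. At each internal node of the tree, the residues of the freshly-added congruences are chosen so that the outgoing edges cover every class modulo the running lcm that survived all earlier levels. This is the same construction style used in Theorem~\ref{thm:p-5}, but the tree is expected to be much smaller here because the modulus $9$ is itself small and only three nontrivial residue classes modulo $9$ must ultimately be handled.

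The main obstacle is combinatorial. With only three copies of $9$ available and no second copy of $3$ allowed, the three uncovered residues modulo $9$ must be attacked using residues modulo $5$, $7$, $11,\ldots$ that, taken together, do not duplicate any modulus other than $9$. One must simultaneously choose the initial triple of mod-$9$ residues, the mod-$3$ residue, and all subsequent residues so that the tree terminates without reusing a modulus. Once the tree is written down, correctness reduces to a finite verification at the leaves: every residue class modulo the overall lcm must appear on some branch, each modulus other than $9$ must occur at most once, and all moduli must be odd and greater than $1$.
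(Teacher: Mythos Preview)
Your plan is the paper's approach: Theorem~\ref{thm:three9s} is proved by exhibiting the covering system explicitly as a tree diagram, nothing more. The gap is that you have not actually produced the tree. A plan to ``branch on $5,7,11,13,\ldots$ and choose residues so that the tree terminates without reusing a modulus'' describes every construction in this paper equally well and does not by itself distinguish $t_9\le 3$ from the bound $t_9\le 4$ you already derived via Theorem~\ref{thm:splittingn}, or from $t_9\le 2$, which is open. The theorem's entire content is the explicit system; until it is written down and checked there is no proof.

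Two further remarks on the plan itself. Your expectation that the tree will be ``much smaller'' than the one for Theorem~\ref{thm:p-5} is off: the paper's construction for $t_9\le3$ spans three figures and uses all the primes $3,5,7,11,13,17,19,23,29$. And your sample opening, $0\pmod 3$ together with $1,2,5\pmod 9$, leaves the scattered residual set $\{4,7,8\}\pmod 9$; the paper instead places all three mod-$9$ congruences in a \emph{single} residue class modulo~$3$, so that together with the lone congruence modulo~$3$ two full classes mod~$3$ are eliminated and the remaining task is to cover one arithmetic progression $a\pmod 3$. That structural choice is what makes the subsequent branching manageable.
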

\begin{proof}
A tree diagram of such a covering system is given by Figures~\ref{fig:three9s}-\ref{fig:three9sT2}, with the main tree in Figure~\ref{fig:three9s} and subtrees in Figures~\ref{fig:three9sT1} and \ref{fig:three9sT2}. Here, $q>29$ is a prime.

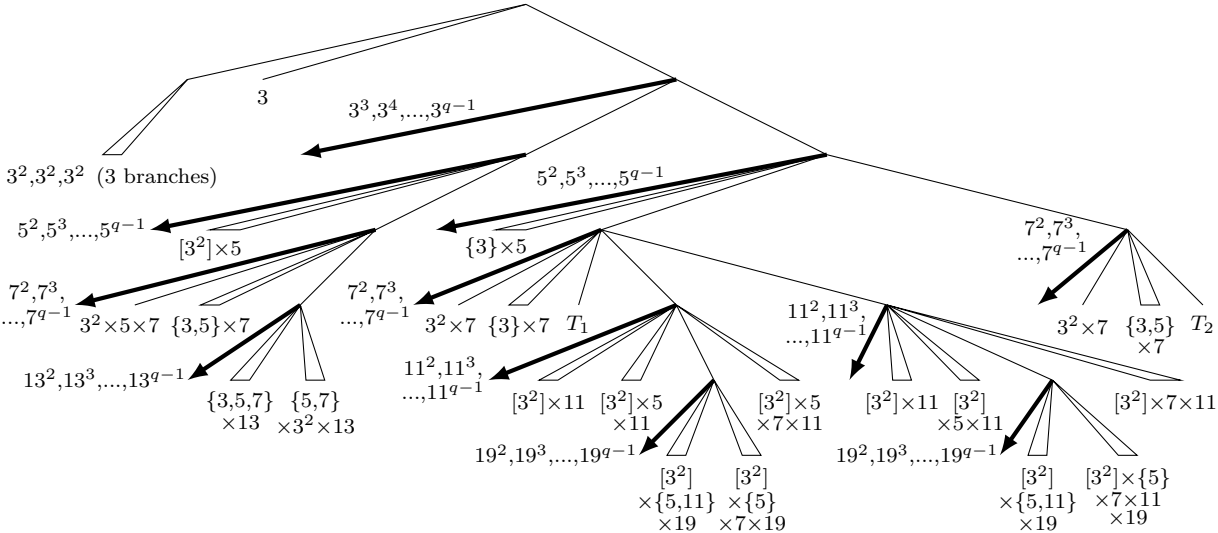
\begin{figure}[H]
\centering
\begin{tikzpicture}[scale=1]
\draw(0,0)--(-4.5,-1)--(-5.625,-2)--(-5.375,-2)--(-4.5,-1);\node[below]at(-5.5,-2){$\substack{3^2,3^2,3^2\ (3\text{ branches})}$};
\draw(0,0)--(-3.5,-1);\node[below]at(-3.5,-1){$\substack{3}$};

\draw(0,0)--(2,-1);
\draw[ultra thick,-latex](2,-1)--(-3,-2);\node[above]at(-1.5,-1.7){$\substack{3^3,3^4,\dotsc,3^{q-1}}$};

\draw(2,-1)--(0,-2);
\draw[ultra thick,-latex](0,-2)--(-5,-3);\node at(-5.9,-3){$\substack{5^2,5^3,\dotsc,5^{q-1}}$};
\draw(0,-2)--(-4.2,-3)--(-3.8,-3)--(0,-2);\node[below]at(-4.2,-2.95){$\substack{[3^2]\times5}$};

\draw(0,-2)--(-2,-3);
\draw[ultra thick,-latex](-2,-3)--(-6,-4);\node at(-6.5,-4){$\substack{7^2,7^3,\\\dotsc,7^{q-1}}$};
\draw(-2,-3)--(-5.2,-4);\node[below]at(-5.4,-3.98){$\substack{3^2\times5\times7}$};
\draw(-2,-3)--(-4.325,-4)--(-4.075,-4)--(-2,-3);\node[below]at(-4.2,-4){$\substack{\{3,5\}\times7}$};

\draw(-2,-3)--(-3,-4);
\draw[ultra thick,-latex](-3,-4)--(-4.5,-5);\node at(-5.6,-5){$\substack{13^2,13^3,\dotsc,13^{q-1}}$};
\draw(-3,-4)--(-3.925,-5)--(-3.675,-5)--(-3,-4);\node[below]at(-3.8,-5){$\substack{\{3,5,7\}\\\times13}$};
\draw(-3,-4)--(-2.925,-5)--(-2.675,-5)--(-3,-4);\node[below]at(-2.8,-5){$\substack{\{5,7\}\\\times3^2\times13}$};

\draw(2,-1)--(4,-2);
\draw[ultra thick,-latex](4,-2)--(-1.2,-3);\node[above]at(1,-2.6){$\substack{5^2,5^3,\dotsc,5^{q-1}}$};
\draw(4,-2)--(-0.4,-3)--(0,-3)--(4,-2);\node[below]at(-0.4,-2.95){$\substack{\{3\}\times5}$};

\draw(4,-2)--(1,-3);
\draw[ultra thick,-latex](1,-3)--(-1.5,-4);\node at(-2,-4){$\substack{7^2,7^3,\\\dotsc,7^{q-1}}$};
\draw(1,-3)--(-0.9,-4);\node[below]at(-1,-3.98){$\substack{3^2\times7}$};
\draw(1,-3)--(-0.225,-4)--(0.025,-4)--(1,-3);\node[below]at(-0.1,-4){$\substack{\{3\}\times7}$};

\draw(1,-3)--(0.7,-4);\node[below]at(0.7,-4){$\substack{T_1}$};
\draw(1,-3)--(2,-4);
\draw[ultra thick,-latex](2,-4)--(-0.5,-5);\node at(-1.1,-5){$\substack{11^2,11^3,\\\dotsc,11^{q-1}}$};
\draw(2,-4)--(0.175,-5)--(0.425,-5)--(2,-4);\node[below]at(0.3,-5){$\substack{[3^2]\times11}$};
\draw(2,-4)--(1.275,-5)--(1.525,-5)--(2,-4);\node[below]at(1.4,-5){$\substack{[3^2]\times5\\\times11}$};

\draw(2,-4)--(2.5,-5);
\draw[ultra thick,-latex](2.5,-5)--(1.5,-6);\node at(0.4,-6){$\substack{19^2,19^3,\dotsc,19^{q-1}}$};
\draw(2.5,-5)--(1.875,-6)--(2.125,-6)--(2.5,-5);\node[below]at(2,-6){$\substack{[3^2]\\\times\{5,11\}\\\times19}$};
\draw(2.5,-5)--(2.875,-6)--(3.125,-6)--(2.5,-5);\node[below]at(3,-6){$\substack{[3^2]\\\times\{5\}\\\times7\times19}$};

\draw(2,-4)--(3.375,-5)--(3.625,-5)--(2,-4);\node[below]at(3.5,-5){$\substack{[3^2]\times5\\\times7\times11}$};

\draw(1,-3)--(4.8,-4);
\draw[ultra thick,-latex](4.8,-4)--(4.3,-5);\node[above] at(4,-4.7){$\substack{11^2,11^3,\\\dotsc,11^{q-1}}$};
\draw(4.8,-4)--(4.875,-5)--(5.125,-5)--(4.8,-4);\node[below]at(5,-5){$\substack{[3^2]\times11}$};
\draw(4.8,-4)--(5.775,-5)--(6.025,-5)--(4.8,-4);\node[below]at(5.9,-5){$\substack{[3^2]\\\times5\times11}$};

\draw(4.8,-4)--(7,-5);
\draw[ultra thick,-latex](7,-5)--(6.3,-6);\node at(5.2,-6){$\substack{19^2,19^3,\dotsc,19^{q-1}}$};
\draw(7,-5)--(6.675,-6)--(6.925,-6)--(7,-5);\node[below]at(6.8,-6){$\substack{[3^2]\\\times\{5,11\}\\\times19}$};
\draw(7,-5)--(7.875,-6)--(8.125,-6)--(7,-5);\node[below]at(8,-6){$\substack{[3^2]\times\{5\}\\\times7\times11\\\times19}$};

\draw(4.8,-4)--(8.3,-5)--(8.7,-5)--(4.8,-4);\node[below]at(8.5,-5){$\substack{[3^2]\times7\times11}$};

\draw(4,-2)--(8,-3);
\draw[ultra thick,-latex](8,-3)--(6.8,-4);\node[above]at(7,-3.6){$\substack{7^2,7^3,\\\dotsc,7^{q-1}}$};
\draw(8,-3)--(7.4,-4);\node[below]at(7.4,-4){$\substack{3^2\times7}$};
\draw(8,-3)--(8.175,-4)--(8.425,-4)--(8,-3);\node[below]at(8.3,-4){$\substack{\{3,5\}\\\times7}$};
\draw(8,-3)--(9,-4);\node[below]at(9,-4){$\substack{T_2}$};
\end{tikzpicture}
\caption{An odd covering with $9$ used exactly three times as a modulus}
\label{fig:three9s}
\end{figure}

\begin{figure}[H]
\centering
\begin{tikzpicture}[scale=1]
\draw[ultra thick,-latex](0,0)--(-6,-1);\node[above]at(-3.5,-0.5){$\substack{11^2,11^3,\dotsc,11^{q-1}}$};
\draw(0,0)--(-5.2,-1)--(-4.8,-1)--(0,0);\node[below]at(-5.1,-1){$\substack{[3^2]\times11}$};
\draw(0,0)--(-4,-1)--(-3.6,-1)--(0,0);\node[below]at(-3.9,-1){$\substack{[3^2]\times5\\\times11}$};

\draw(0,0)--(-2.2,-1);
\draw[ultra thick,-latex](-2.2,-1)--(-4,-2);\node at(-5.1,-2){$\substack{17^2,17^3,\dotsc,17^{q-1}}$};
\draw(-2.2,-1)--(-3.325,-2)--(-3.075,-2)--(-2.2,-1);\node[below]at(-3.2,-2){$\substack{[3^2]\times\{5,7\}\\\times17}$};
\draw(-2.2,-1)--(-2.025,-2)--(-1.775,-2)--(-2.2,-1);\node[below]at(-1.9,-2){$\substack{\{5,7\}\\\times11\times17}$};

\draw(0,0)--(-0.4,-1);
\draw[ultra thick,-latex](-0.4,-1)--(-1.3,-2);\node[above]at(-1.3,-1.6){$\substack{17^2,17^3,\\\dotsc,17^{q-1}}$};
\draw(-0.4,-1)--(-0.525,-2)--(-0.275,-2)--(-0.4,-1);\node[below]at(-0.4,-2){$\substack{[3^2]\times\{5,7\}\\\times17}$};
\draw(-0.4,-1)--(0.775,-2)--(1.025,-2)--(-0.4,-1);\node[below]at(0.9,-2){$\substack{\{5,7\}\\\times3\times11\\\times17}$};

\draw(0,0)--(2.1,-1);
\draw[ultra thick,-latex](2.1,-1)--(1.5,-2);\node[above]at(1.3,-1.6){$\substack{17^2,17^3,\\\dotsc,17^{q-1}}$};
\draw(2.1,-1)--(2.275,-2)--(2.525,-2)--(2.1,-1);\node[below]at(2.4,-2){$\substack{[3^2]\times\{5,7\}\\\times17}$};
\draw(2.1,-1)--(3.575,-2)--(3.825,-2)--(2.1,-1);\node[below]at(3.7,-2){$\substack{\{5,7\}\\\times3^2\times11\\\times17}$};

\draw(0,0)--(5.1,-1);
\draw[ultra thick,-latex](5.1,-1)--(4.4,-2);\node[above]at(4.2,-1.7){$\substack{23^2,23^3,\\\dotsc,23^{q-1}}$};
\draw(5.1,-1)--(5.275,-2)--(5.525,-2)--(5.1,-1);\node[below]at(5.4,-2){$\substack{[3^2]\times\{5,7,11\}\\\times23\\22\text{ branches}}$};
\end{tikzpicture}
\caption{$T_1$ in Figure~\ref{fig:three9s}}
\label{fig:three9sT1}
\end{figure}

\begin{figure}[H]
\centering
\begin{tikzpicture}[scale=1]
\draw[ultra thick,-latex](0,0)--(-7.5,-1);\node[above]at(-4,-0.5){$\substack{11^2,11^3,\dotsc,11^{q-1}}$};
\draw(0,0)--(-6.6,-1)--(-6.2,-1)--(0,0);\node[below]at(-6.6,-0.9){$\substack{[3^2]\times11}$};

\draw(0,0)--(-5,-1);
\draw[ultra thick,-latex](-5,-1)--(-8,-2);\node at(-8.6,-2){$\substack{13^2,13^3,\\\dotsc,13^{q-1}}$};
\draw(-5,-1)--(-7.525,-2)--(-7.275,-2)--(-5,-1);\node[below]at(-7.5,-2){$\substack{\{3,5,7\}\\\times13}$};
\draw(-5,-1)--(-6.625,-2)--(-6.375,-2)--(-5,-1);\node[below]at(-6.5,-2){$\substack{\{5,7\}\\\times11\times13}$};

\draw(0,0)--(-2.5,-1);
\draw[ultra thick,-latex](-2.5,-1)--(-5.5,-2);\node[above]at(-4.2,-1.65){$\substack{13^2,13^3,\\\dotsc,13^{q-1}}$};
\draw(-2.5,-1)--(-4.925,-2)--(-4.675,-2)--(-2.5,-1);\node[below]at(-4.9,-2){$\substack{\{3,5,7\}\\\times13}$};
\draw(-2.5,-1)--(-3.925,-2)--(-3.675,-2)--(-2.5,-1);\node[below]at(-3.9,-2){$\substack{\{5,7\}\\\times3\times11\\\times13}$};

\draw(0,0)--(-0.3,-1);
\draw[ultra thick,-latex](-0.3,-1)--(-3.1,-2);\node[above]at(-1.8,-1.6){$\substack{13^2,13^3,\\\dotsc,13^{q-1}}$};
\draw(-0.3,-1)--(-2.625,-2)--(-2.375,-2)--(-0.3,-1);\node[below]at(-2.5,-2){$\substack{\{3,5,7\}\\\times13}$};
\draw(-0.3,-1)--(-1.625,-2)--(-1.375,-2)--(-0.3,-1);\node[below]at(-1.5,-2){$\substack{\{5,7\}\\\times3^2\times11\\\times13}$};

\draw(0,0)--(5,-1);
\draw[ultra thick,-latex](5,-1)--(-0.8,-2);\node[above]at(1.4,-1.6){$\substack{13^2,13^3,\dotsc,13^{q-1}}$};
\draw(5,-1)--(-0.2,-2)--(0.2,-2)--(5,-1);\node[below]at(0,-2){$\substack{\{3,5,7\}\times13}$};

\draw(5,-1)--(2,-2);
\draw[ultra thick,-latex](2,-2)--(-4.2,-4);\node[above]at(-3.7,-3.7){$\substack{29^2,29^3,\dotsc,29^{q-1}}$};
\draw(2,-2)--(-3.725,-4)--(-3.475,-4)--(2,-2);\node[below]at(-3.6,-4){$\substack{[3^2]\\\times\{5,7,11\}\\\times29}$};
\draw(2,-2)--(-2.425,-4)--(-2.175,-4)--(2,-2);\node[below]at(-2.3,-4){$\substack{[3^2]\times\{5\}\\\times13\times29}$};

\draw(5,-1)--(2.6,-2);
\draw[ultra thick,-latex](2.6,-2)--(-0.4,-4);\node at(-0.95,-4){$\substack{29^2,29^3,\\\dotsc,29^{q-1}}$};
\draw(2.6,-2)--(-0.025,-4)--(0.225,-4)--(2.6,-2);\node[below]at(0.1,-4){$\substack{[3^2]\\\times\{5,7,11\}\\\times29}$};
\draw(2.6,-2)--(1.275,-4)--(1.525,-4)--(2.6,-2);\node[below]at(1.4,-4){$\substack{[3^2]\times\{5\}\\\times7\times13\\\times29}$};

\draw(5,-1)--(4.5,-2);
\draw[ultra thick,-latex](4.5,-2)--(2.3,-4);\node[above]at(3.1,-3){$\substack{29^2,29^3,\\\dotsc,29^{q-1}}$};
\draw(4.5,-2)--(2.675,-4)--(2.925,-4)--(4.5,-2);\node[below]at(2.8,-4){$\substack{[3^2]\\\times\{5,7,11\}\\\times29}$};
\draw(4.5,-2)--(3.975,-4)--(4.225,-4)--(4.5,-2);\node[below]at(4.1,-4){$\substack{[3^2]\times\{5\}\\\times11\times13\\\times29}$};

\draw(5,-1)--(6,-2);
\draw[ultra thick,-latex](6,-2)--(4.9,-4);\node[above]at(5.1,-2.8){$\substack{29^2,29^3,\\\dotsc,29^{q-1}}$};
\draw(6,-2)--(5.275,-4)--(5.525,-4)--(6,-2);\node[below]at(5.4,-4){$\substack{[3^2]\\\times\{5,7,11\}\\\times29}$};
\draw(6,-2)--(6.575,-4)--(6.825,-4)--(6,-2);\node[below]at(6.7,-4){$\substack{[3^2]\times\{5\}\\\times7\times11\\\times13\times29}$};

\draw(0,0)--(6,-1)--(6.4,-1)--(0,0);\node[below]at(6.2,-1){$\substack{[3^2]\times7\times11}$};
\end{tikzpicture}
\caption{$T_2$ in Figure~\ref{fig:three9s}}
\label{fig:three9sT2}
\end{figure}
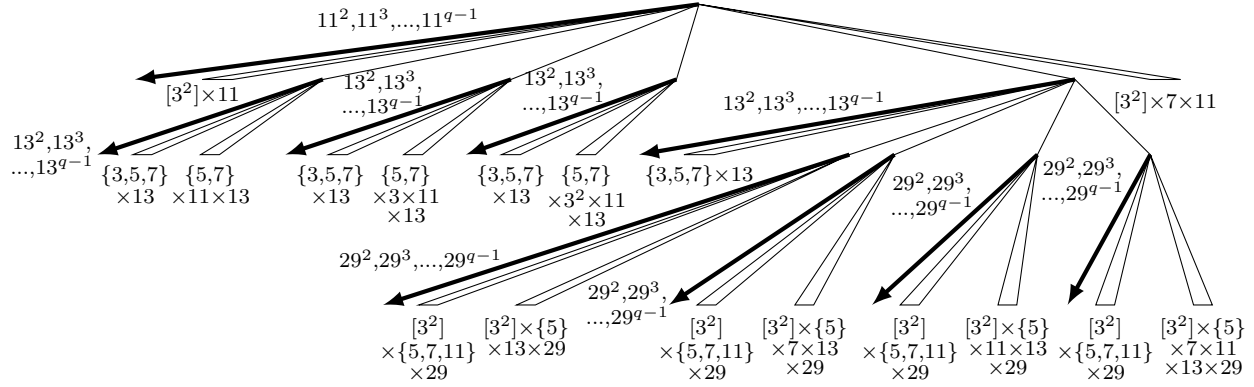
\end{proof}

\begin{theorem}\label{thm:four15s}
There exists a covering system of the integers such that all moduli are odd, greater than 1, and distinct except that the modulus $15$ is used four times.
\end{theorem}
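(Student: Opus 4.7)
The plan is to exhibit an explicit covering system via a tree diagram in the style of Theorem~\ref{thm:three9s}. Since $15=3\cdot5$, the four copies of modulus~$15$ cover $4/15$ of $\mathbb{Z}$ at the root. To shrink the remaining uncovered portion before deeper branching, I would also place one congruence of modulus~$3$ and one of modulus~$5$ at the top, with residues chosen so that the $3$-class and the $5$-class intersect in exactly one residue class modulo~$15$ and the four $15$-classes lie outside the union of the $3$-class and $5$-class. Inclusion--exclusion then shows that the root already covers $4+5+3-1=11$ of the $15$ residue classes modulo~$15$, leaving precisely four classes modulo~$15$ to be handled by subtrees.

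For each of those uncovered classes modulo~$15$, I would open a subtree patterned on Figures~\ref{fig:three9sT1} and~\ref{fig:three9sT2}. The fundamental device is a chain that branches on the prime powers $5^2,5^3,\dotsc,5^{q-1}$ for a sufficiently large prime~$q$, combined with the $[3^a]$-notation to sweep all exponents $a\in\{0,1,\dotsc,q-1\}$ beneath each. This exhausts moduli of the form $3^a\cdot5^b$ with $b\geq 2$ without ever reusing~$15$. Parallel branches introducing $7^j,11^j,13^j,\dotsc$ (via the $\{\cdot\}\times$ and $[\cdot]\times$ compound notation) then cover the residue classes that fall into composite moduli of the form $3^a\cdot5^b\cdot7^c\cdots$, using as many auxiliary primes as needed.

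Verification then proceeds along the familiar two-step pattern: at each internal node one checks that the children's residue classes partition the class inherited from the parent, and across the full tree one checks that no modulus appears at two distinct leaves (with~$15$ itself flagged as the single exception, occurring four times). The latter is essentially automatic from the $[3^a]\times\{\cdot\}\times m_0$ notation once the tree is laid out, provided one never instantiates the $a=1$ entry of a $[3^a]$-chain directly against a $5^1$-branch in a way that reconstructs the leaf modulus~$15$.

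The main obstacle will be the bookkeeping: because $15$ has two small prime factors (in contrast with the single prime factor in the $t_9$ and $t_p$ constructions), there are roughly twice as many ways for composite moduli to collide. For example, the modulus $3^2\cdot5^3$ could in principle arise from either a ``branch on $5^3$ then on $3^2$'' path or a ``branch on $3^2$ then on $5^3$'' path, so a consistent branching order must be fixed and the subtrees must be separated according to which primes they introduce first. Once this discipline is in place, the remainder is a routine but careful enumeration analogous to the $9$ and $25$ cases treated elsewhere in the paper, producing a valid tree whose leaf congruences form the desired covering system.
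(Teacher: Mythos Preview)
The gap is execution. The paper's proof \emph{is} the explicit tree diagram in Figure~\ref{fig:four15s}: every modulus and every branching is specified, and verification is then a finite check. Your proposal is a plan for such a diagram, not the diagram itself. You describe a plausible top layer (one congruence mod~$3$, one mod~$5$, four mod~$15$, leaving four classes mod~$15$) and then defer the rest to ``routine but careful enumeration.'' But in this subject the enumeration is the entire content: there is no general lemma guaranteeing that four residue classes mod~$15$ can be covered by congruences with distinct odd moduli greater than~$1$ and not equal to~$15$, so existence must be exhibited, not asserted.

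Your plan also carries a concrete obstacle you do not resolve. You propose four separate subtrees, one per uncovered class mod~$15$, each equipped with its own $5^2,5^3,\dotsc,5^{q-1}$ chain and $[3^a]$-sweeps. But each modulus $5^2$, $3^2$, $7$, etc., may appear only once in the system. Either the four subtrees must share the \emph{same} congruence for each such modulus---which forces residue compatibilities across the four classes that you have not checked---or the subtrees must draw on disjoint pools of moduli, which is a serious constraint you have not shown can be met. The paper avoids this difficulty altogether: rather than leaving four classes mod~$15$, it places the four copies of~$15$ (together with the single~$5$) so as to absorb one entire class mod~$3$, and then funnels the last class mod~$3$ through a \emph{single} continuing branch via a further mod-$3$ split (using $3^2,3^3,\dotsc$). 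With only one continuing branch there is only one $5$-chain, one $7$-chain, one $11$-chain, one $13$-chain, and distinctness of moduli becomes straightforward. Your ``consistent branching order'' remark gestures toward this issue but does not pin down a structure that actually works.
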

\begin{proof}
A tree diagram of such a covering system is given by Figure~\ref{fig:four15s}. Here, $q>13$ is a prime.
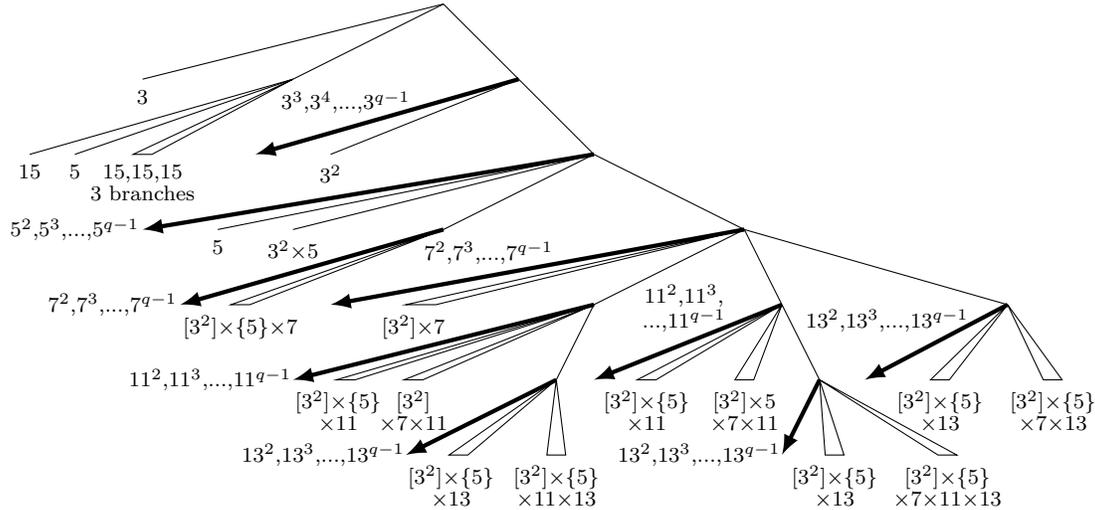
\begin{figure}[H]
\centering
\begin{tikzpicture}[scale=1]
\draw(0,0)--(-4,-1);\node[below]at(-4,-1){$\substack{3}$};
\draw(0,0)--(-2,-1);\draw(-2,-1)--(-5.5,-2);\node[below]at(-5.5,-2){$\substack{15}$};
\draw(-2,-1)--(-4.9,-2);\node[below]at(-4.9,-2){$\substack{5}$};
\draw(-2,-1)--(-4.125,-2)--(-3.875,-2)--(-2,-1);\node[below]at(-4,-2){$\substack{15,15,15\\3\text{ branches}}$};
\draw(0,0)--(1,-1);\draw[ultra thick,-latex](1,-1)--(-2.5,-2);\node[above]at(-1.3,-1.6){$\substack{3^3,3^4,\dotsc,3^{q-1}}$};
\draw(1,-1)--(-1.5,-2);\node[below]at(-1.5,-2){$\substack{3^2}$};

\draw(1,-1)--(2,-2);\draw[ultra thick,-latex](2,-2)--(-4,-3);\node at(-4.9,-3){$\substack{5^2,5^3,\dotsc,5^{q-1}}$};
\draw(2,-2)--(-3,-3);\node[below]at(-3,-3){$\substack{5}$};
\draw(2,-2)--(-2,-3);\node[below]at(-2,-3){$\substack{3^2\times5}$};

\draw(2,-2)--(0,-3);\draw[ultra thick,-latex](0,-3)--(-3.5,-4);\node at(-4.4,-4){$\substack{7^2,7^3,\dotsc,7^{q-1}}$};
\draw(0,-3)--(-2.825,-4)--(-2.575,-4)--(0,-3);\node[below]at(-2.7,-4){$\substack{[3^2]\times\{5\}\times7}$};

\draw(2,-2)--(4,-3);\draw[ultra thick,-latex](4,-3)--(-1.5,-4);\node[above]at(0.6,-3.6){$\substack{7^2,7^3,\dotsc,7^{q-1}}$};
\draw(4,-3)--(-0.5,-4)--(-0.1,-4)--(4,-3);\node[below]at(-0.4,-4){$\substack{[3^2]\times7}$};

\draw(4,-3)--(2,-4);\draw[ultra thick,-latex](2,-4)--(-2,-5);\node at(-3.1,-5){$\substack{11^2,11^3,\dotsc,11^{q-1}}$};
\draw(2,-4)--(-1.425,-5)--(-1.175,-5)--(2,-4);\node[below]at(-1.4,-5){$\substack{[3^2]\times\{5\}\\\times11}$};
\draw(2,-4)--(-0.525,-5)--(-0.275,-5)--(2,-4);\node[below]at(-0.4,-5){$\substack{[3^2]\\\times7\times11}$};

\draw(2,-4)--(1.5,-5);\draw[ultra thick,-latex](1.5,-5)--(-0.5,-6);\node at(-1.6,-6){$\substack{13^2,13^3,\dotsc,13^{q-1}}$};
\draw(1.5,-5)--(0.075,-6)--(0.325,-6)--(1.5,-5);\node[below]at(0.1,-6){$\substack{[3^2]\times\{5\}\\\times13}$};
\draw(1.5,-5)--(1.375,-6)--(1.625,-6)--(1.5,-5);\node[below]at(1.5,-6){$\substack{[3^2]\times\{5\}\\\times11\times13}$};

\draw(4,-3)--(4.5,-4);\draw[ultra thick,-latex](4.5,-4)--(2,-5);\node[above]at(3.2,-4.5){$\substack{11^2,11^3,\\\dotsc,11^{q-1}}$};
\draw(4.5,-4)--(2.575,-5)--(2.825,-5)--(4.5,-4);\node[below]at(2.7,-5){$\substack{[3^2]\times\{5\}\\\times11}$};
\draw(4.5,-4)--(3.875,-5)--(4.125,-5)--(4.5,-4);\node[below]at(4,-5){$\substack{[3^2]\times5\\\times7\times11}$};

\draw(4.5,-4)--(5,-5);\draw[ultra thick,-latex](5,-5)--(4.5,-6);\node at(3.4,-6){$\substack{13^2,13^3,\dotsc,13^{q-1}}$};
\draw(5,-5)--(5.075,-6)--(5.325,-6)--(5,-5);\node[below]at(5.2,-6){$\substack{[3^2]\times\{5\}\\\times13}$};
\draw(5,-5)--(6.575,-6)--(6.825,-6)--(5,-5);\node[below]at(6.7,-6){$\substack{[3^2]\times\{5\}\\\times7\times11\times13}$};

\draw(4,-3)--(7.5,-4);\draw[ultra thick,-latex](7.5,-4)--(5.6,-5);\node[above]at(5.9,-4.5){$\substack{13^2,13^3,\dotsc,13^{q-1}}$};
\draw(7.5,-4)--(6.475,-5)--(6.725,-5)--(7.5,-4);\node[below]at(6.6,-5){$\substack{[3^2]\times\{5\}\\\times{13}}$};
\draw(7.5,-4)--(7.975,-5)--(8.225,-5)--(7.5,-4);\node[below]at(8.1,-5){$\substack{[3^2]\times\{5\}\\\times7\times{13}}$};
\end{tikzpicture}
\caption{An odd covering with $15$ used exactly four times as a modulus}
\label{fig:four15s}
\end{figure}
\end{proof}

\begin{theorem}\label{thm:five21s}
There exists a covering system of the integers such that all moduli are odd, greater than 1, and distinct except that the modulus $21$ is used five times.
\end{theorem}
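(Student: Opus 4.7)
The plan is to mirror the strategy of Theorems~\ref{thm:three9s} and \ref{thm:four15s} and construct the desired covering system explicitly via a tree diagram, using the tree notation established in Section~2. Before doing so, observe that a black-box application of Theorem~\ref{thm:splittingn} would be insufficient: taking $k=7$, $m=3$, $t=4$ together with the bound $t_7\le 4$ only yields $t_{21}\le (m-1)(t-1)+1=7$, which is strictly weaker than the claim. A direct construction is therefore required.

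The tree will begin by splitting on the prime $3$, with one unsplit branch of modulus $3$ covering a single residue class modulo $3$ and the other residue classes routed into subtrees. Because $21=3\cdot 7$, each use of $21$ corresponds to simultaneously specifying a residue modulo $3$ and modulo $7$; the five copies of $21$ will be deployed at nodes where, after using $7$ as a branching prime and absorbing the higher powers $7^2,7^3,\dotsc,7^{q-1}$ via the chain notation, exactly five residue classes modulo $21$ remain uncovered. Further primes from $\{5,11,13\}$, and possibly $\{17,19,23,\dotsc\}$ at deeper levels, will be introduced as successive branching primes, with all but one class modulo each such prime $p$ eliminated using the chain $p^2,\dotsc,p^{q-1}$ for some sufficiently large prime $q$, exactly as in the previous proofs. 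Lemma~\ref{lem:coveringshift} will be invoked (as elsewhere) to normalize residues so that the tree description remains symmetric.

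The main obstacle is organizing the tree so that, at each level, the uncovered residue classes can be partitioned using only \emph{distinct} odd moduli (apart from the permitted five copies of $21$), while simultaneously keeping the total count of $21$'s at exactly five. Particular care is needed when $21$ would otherwise arise naturally as a combined modulus $3\times 7$ through the wedge or $[3^\alpha]$ notation at deeper levels, since such incidental occurrences must be rerouted to avoid exceeding the budget. Once the tree is produced, verification reduces to a finite, mechanical check that every leaf corresponds to a valid congruence, that no non-$21$ modulus is repeated across leaves, and that the union of leaf congruences exhausts $\mathbb{Z}$; this can be performed by inspection in the same manner as for Theorems~\ref{thm:three9s} and \ref{thm:four15s}.
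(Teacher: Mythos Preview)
Your proposal correctly identifies both that Theorem~\ref{thm:splittingn} is insufficient and that the argument must proceed by explicit tree construction in the style of Theorems~\ref{thm:three9s} and~\ref{thm:four15s}. This is indeed the paper's approach. However, the proposal stops at the level of a plan: it never produces the tree. For this theorem the tree \emph{is} the proof; there is no separate argument whose verification is merely mechanical. The paper's proof consists almost entirely of the explicit diagrams (the main tree plus several subtrees $T_1$--$T_5$), which require primes through $31$ and a choice of $q>31$. Absent a concrete tree, nothing in your outline guarantees that a configuration with only five copies of $21$ and all other odd moduli distinct actually exists.

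There is also a small misdescription of the mechanism. You write that the five copies of $21$ arise after introducing $7$ as a branching prime and absorbing $7^2,\dotsc,7^{q-1}$. In the paper's construction the five $21$'s are placed much more directly: one of the two nontrivial residue classes modulo $3$ is split into seven classes modulo $21$, five of which are covered by $21$ itself, one by $7$, and the last is pushed into a subtree handled with the primes $5,11,13,17,19,23$. The chain $7^2,\dotsc,7^{q-1}$ is deployed in the \emph{other} residue class modulo $3$ (inside the $3^2$-branch), not where the $21$'s sit. This matters because it is precisely the careful allocation of the small primes across the two mod-$3$ branches that makes five copies suffice; your outline does not indicate how that allocation would be made.

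In short: same approach as the paper, but the essential content---the actual tree---is missing, and until you exhibit one the claim is unproven.
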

\begin{proof}
A tree diagram of such a covering system is given by Figures~\ref{fig:five21s}-\ref{fig:five21sT3}, with the main tree in Figure~\ref{fig:five21s} and subtrees in Figures~\ref{fig:five21sT1}-\ref{fig:five21sT3}. Here, $q>31$ is a prime.
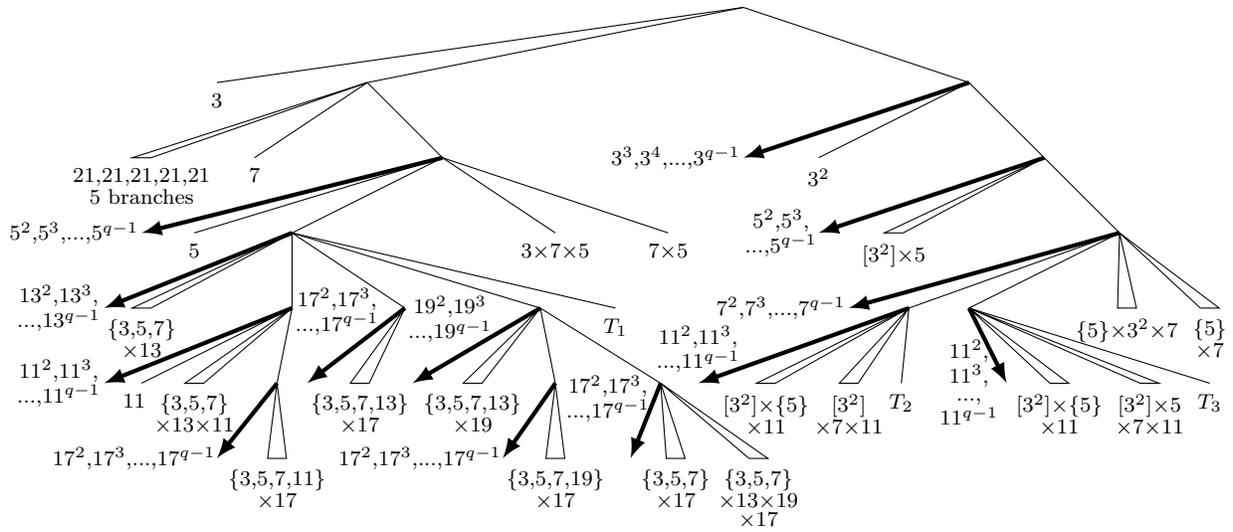
\begin{figure}[H]
\centering
\begin{tikzpicture}[scale=1]
\draw(0,0)--(-7,-1);\node[below]at(-7,-1){$\substack{3}$};
\draw(0,0)--(-5,-1);\draw(-5,-1)--(-8.125,-2)--(-7.875,-2)--(-5,-1);\node[below]at(-8,-2){$\substack{21,21,21,21,21\\5\text{ branches}}$};
\draw(-5,-1)--(-6.5,-2);\node[below]at(-6.5,-2){$\substack{7}$};

\draw(-5,-1)--(-4,-2);\draw[ultra thick,-latex](-4,-2)--(-8,-3);\node at(-8.9,-3){$\substack{5^2,5^3,\dotsc,5^{q-1}}$};
\draw(-4,-2)--(-7.3,-3);\node[below]at(-7.3,-3){$\substack{5}$};
\draw(-4,-2)--(-6,-3);\draw[ultra thick,-latex](-6,-3)--(-8.5,-4);\node at(-9.1,-4){$\substack{13^2,13^3,\\\dotsc,13^{q-1}}$};
\draw(-6,-3)--(-8.125,-4)--(-7.875,-4)--(-6,-3);\node[below]at(-8,-4){$\substack{\{3,5,7\}\\\times13}$};

\draw(-6,-3)--(-6,-4);\draw[ultra thick,-latex](-6,-4)--(-8.5,-5);\node at(-9.1,-5){$\substack{11^2,11^3,\\\dotsc,11^{q-1}}$};
\draw(-6,-4)--(-8,-5);\node[below]at(-8.1,-5){$\substack{11}$};
\draw(-6,-4)--(-7.425,-5)--(-7.175,-5)--(-6,-4);\node[below]at(-7.3,-5){$\substack{\{3,5,7\}\\\times13\times11}$};
\draw(-6,-4)--(-6.2,-5);\draw[ultra thick,-latex](-6.2,-5)--(-7,-6);\node at(-8.1,-6){$\substack{17^2,17^3,\dotsc,17^{q-1}}$};
\draw(-6.2,-5)--(-6.325,-6)--(-6.075,-6)--(-6.2,-5);\node[below]at(-6.2,-6){$\substack{\{3,5,7,11\}\\\times17}$};

\draw(-6,-3)--(-4.5,-4);\draw[ultra thick,-latex](-4.5,-4)--(-5.8,-5);\node[above]at(-5.4,-4.5){$\substack{17^2,17^3,\\\dotsc,17^{q-1}}$};
\draw(-4.5,-4)--(-5.225,-5)--(-4.975,-5)--(-4.5,-4);\node[below]at(-5.1,-5){$\substack{\{3,5,7,13\}\\\times17}$};

\draw(-6,-3)--(-2.7,-4);\draw[ultra thick,-latex](-2.7,-4)--(-4.4,-5);\node[above]at(-3.9,-4.6){$\substack{19^2,19^3\\\dotsc,19^{q-1}}$};
\draw(-2.7,-4)--(-3.725,-5)--(-3.475,-5)--(-2.7,-4);\node[below]at(-3.6,-5){$\substack{\{3,5,7,13\}\\\times19}$};
\draw(-2.7,-4)--(-2.5,-5);\draw[ultra thick,-latex](-2.5,-5)--(-3.2,-6);\node at(-4.3,-6){$\substack{17^2,17^3,\dotsc,17^{q-1}}$};
\draw(-2.5,-5)--(-2.625,-6)--(-2.375,-6)--(-2.5,-5);\node[below]at(-2.5,-6){$\substack{\{3,5,7,19\}\\\times17}$};
\draw(-2.7,-4)--(-1.1,-5);\draw[ultra thick,-latex](-1.1,-5)--(-1.5,-6);\node at(-1.8,-5.2){$\substack{17^2,17^3,\\\dotsc,17^{q-1}}$};
\draw(-1.1,-5)--(-1.025,-6)--(-0.775,-6)--(-1.1,-5);\node[below]at(-0.9,-6){$\substack{\{3,5,7\}\\\times17}$};
\draw(-1.1,-5)--(0.075,-6)--(0.325,-6)--(-1.1,-5);\node[below]at(0.2,-6){$\substack{\{3,5,7\}\\\times13\times19\\\times17}$};

\draw(-6,-3)--(-1.7,-4);\node[below]at(-1.7,-4){$\substack{T_1}$};

\draw(-4,-2)--(-2.5,-3);\node[below]at(-2.5,-3){$\substack{3\times7\times5}$};
\draw(-4,-2)--(-1,-3);\node[below]at(-1,-3){$\substack{7\times5}$};

\draw(0,0)--(3,-1);\draw[ultra thick,-latex](3,-1)--(0,-2);\node at (-0.9,-2){$\substack{3^3,3^4,\dotsc,3^{q-1}}$};
\draw(3,-1)--(1,-2);\node[below]at(1,-2){$\substack{3^2}$};
\draw(3,-1)--(4,-2);\draw[ultra thick,-latex](4,-2)--(1,-3);\node at(0.5,-3){$\substack{5^2,5^3,\\\dotsc,5^{q-1}}$};
\draw(4,-2)--(1.875,-3)--(2.125,-3)--(4,-2);\node[below]at(2,-3){$\substack{[3^2]\times5}$};

\draw(4,-2)--(5,-3);\draw[ultra thick,-latex](5,-3)--(1.4,-4);\node at(0.5,-4){$\substack{7^2,7^3,\dotsc,7^{q-1}}$};

\draw(5,-3)--(2.2,-4);\draw[ultra thick,-latex](2.2,-4)--(-0.6,-5);\node[above]at(-0.6,-5){$\substack{11^2,11^3,\\\dotsc,11^{q-1}}$};
\draw(2.2,-4)--(0.175,-5)--(0.425,-5)--(2.2,-4);\node[below]at(0.3,-5){$\substack{[3^2]\times\{5\}\\\times11}$};
\draw(2.2,-4)--(1.275,-5)--(1.525,-5)--(2.2,-4);\node[below]at(1.4,-5){$\substack{[3^2]\\\times7\times11}$};
\draw(2.2,-4)--(2.1,-5);\node[below]at(2.1,-5){$\substack{T_2}$};

\draw(5,-3)--(3,-4);\draw[ultra thick,-latex](3,-4)--(3.5,-5);\node[above]at(3,-5.7){$\substack{11^2,\\11^3,\\\dotsc,\\11^{q-1}}$};
\draw(3,-4)--(4.075,-5)--(4.325,-5)--(3,-4);\node[below]at(4.2,-5){$\substack{[3^2]\times\{5\}\\\times11}$};
\draw(3,-4)--(5.275,-5)--(5.525,-5)--(3,-4);\node[below]at(5.4,-5){$\substack{[3^2]\times5\\\times7\times11}$};
\draw(3,-4)--(6.2,-5);\node[below]at(6.2,-5){$\substack{T_3}$};

\draw(5,-3)--(4.975,-4)--(5.225,-4)--(5,-3);\node[below]at(5.1,-4){$\substack{\{5\}\times3^2\times7}$};
\draw(5,-3)--(6.075,-4)--(6.325,-4)--(5,-3);\node[below]at(6.2,-4){$\substack{\{5\}\\\times7}$};
\end{tikzpicture}
\caption{An odd covering with $21$ used exactly five times as a modulus}
\label{fig:five21s}
\end{figure}

\begin{figure}[H]
\centering
\begin{tikzpicture}[scale=1]
\draw[ultra thick,-latex](0,0)--(-6,-1);\node at(-6.6,-1){$\substack{23^2,23^3,\\\dotsc,23^{q-1}}$};
\draw(0,0)--(-5.2,-1)--(-4.8,-1)--(0,0);\node[below]at(-5,-1){$\substack{\{3,5,7,13\}\times23}$};

\draw(0,0)--(-2.5,-1);\draw[ultra thick,-latex](-2.5,-1)--(-6,-2);\node at(-6.6,-2){$\substack{11^2,11^3,\\\dotsc,11^{q-1}}$};
\draw(-2.5,-1)--(-5.4,-2);\node[below]at(-5.5,-2){$\substack{11}$};
\draw(-2.5,-1)--(-4.725,-2)--(-4.475,-2)--(-2.5,-1);\node[below]at(-4.6,-2){$\substack{\{3,5,7\}\\\times23\times11}$};
\draw(-2.5,-1)--(-3.5,-2);\draw[ultra thick,-latex](-3.5,-2)--(-4.2,-3);\node at(-5.3,-3){$\substack{17^2,17^3,\dotsc,17^{q-1}}$};
\draw(-3.5,-2)--(-3.625,-3)--(-3.375,-3)--(-3.5,-2);\node[below]at(-3.5,-3){$\substack{\{3,5,7,11\}\\\times17}$};

\draw(0,0)--(0,-1);\draw[ultra thick,-latex](0,-1)--(-2.6,-2);\node[above]at(-1.3,-1.5){$\substack{11^2,11^3,\\\dotsc,11^{q-1}}$};
\draw(0,-1)--(-2,-2);\node[below]at(-2,-2){$\substack{11}$};
\draw(0,-1)--(-1.225,-2)--(-0.975,-2)--(0,-1);\node[below]at(-1.1,-2){$\substack{\{3,5,7\}\\\times13\times23\times11}$};
\draw(0,-1)--(0.5,-2);\draw[ultra thick,-latex](0.5,-2)--(-0.3,-3);\node at(-1.4,-3){$\substack{17^2,17^3,\dotsc,17^{q-1}}$};
\draw(0.5,-2)--(0.375,-3)--(0.625,-3)--(0.5,-2);\node[below]at(0.5,-3){$\substack{\{3,5,7,11\}\\\times17}$};

\draw(0,0)--(2,-1);\draw[ultra thick,-latex](2,-1)--(1,-2);\node[above]at(1,-1.6){$\substack{17^2,17^3,\\\dotsc,17^{q-1}}$};
\draw(2,-1)--(1.675,-2)--(1.925,-2)--(2,-1);\node[below]at(1.8,-2){$\substack{\{3,5,7,23\}\\\times17}$};

\draw(0,0)--(4.5,-1);\draw[ultra thick,-latex](4.5,-1)--(3,-2);\node[above]at(3.2,-1.7){$\substack{17^2,17^3,\\\dotsc,17^{q-1}}$};
\draw(4.5,-1)--(3.675,-2)--(3.925,-2)--(4.5,-1);\node[below]at(3.8,-2){$\substack{\{3,5,7\}\\\times17}$};
\draw(4.5,-1)--(5.075,-2)--(5.325,-2)--(4.5,-1);\node[below]at(5.2,-2){$\substack{\{3,5,7\}\\\times13\times23\times17}$};

\draw(0,0)--(5.5,-1);\node[below]at(5.5,-1){$\substack{T_4}$};
\draw(0,0)--(6.5,-1);\node[below]at(6.5,-1){$\substack{T_5}$};
\end{tikzpicture}
\caption{$T_1$ in Figure~\ref{fig:five21s}}
\label{fig:five21sT1}
\end{figure}

\begin{figure}[H]
\centering
\begin{minipage}{0.48\textwidth}
\centering
\begin{tikzpicture}[scale=1]
\draw[ultra thick,-latex](0,0)--(-3,-1);\node at(-3.6,-1){$\substack{19^2,19^3,\\\dotsc,19^{q-1}}$};
\draw(0,0)--(-2.425,-1)--(-2.175,-1)--(0,0);\node[below]at(-2.3,-1){$\substack{\{3,5,7,23\}\\\times19}$};
\draw(0,0)--(-1,-1);\draw[ultra thick,-latex](-1,-1)--(-2,-2);\node at(-3.1,-2){$\substack{17^2,17^3,\dotsc,17^{q-1}}$};
\draw(-1,-1)--(-1.325,-2)--(-1.075,-2)--(-1,-1);\node[below]at(-1.2,-2){$\substack{\{3,5,7,19\}\\\times17}$};
\draw(0,0)--(1,-1);\draw[ultra thick,-latex](1,-1)--(-0.5,-2);\node[above]at(0,-1.5){$\substack{17^2,17^3,\\\dotsc,17^{q-1}}$};
\draw(1,-1)--(0.075,-2)--(0.325,-2)--(1,-1);\node[below]at(0.2,-2){$\substack{\{3,5,7\}\\\times17}$};
\draw(1,-1)--(1.275,-2)--(1.525,-2)--(1,-1);\node[below]at(1.4,-2){$\substack{\{3,5,7\}\\\times23\times19\\\times17}$};
\end{tikzpicture}
\caption{$T_4$ in Figure~\ref{fig:five21sT1}}
\label{fig:five21sT4}
\end{minipage}
\begin{minipage}{0.48\textwidth}
\centering
\begin{tikzpicture}[scale=1]
\draw[ultra thick,-latex](0,0)--(-4,-1);\node at(-4.6,-1){$\substack{19^2,19^3,\\\dotsc,19^{q-1}}$};
\draw(0,0)--(-3.425,-1)--(-3.175,-1)--(0,0);\node[below]at(-3.3,-1){$\substack{\{3,5,7\}\\\times19}$};
\draw(0,0)--(-2.125,-1)--(-1.875,-1)--(0,0);\node[below]at(-2,-1){$\substack{\{3,5,7\}\\\times13\times23\times19}$};
\draw(0,0)--(-0.5,-1);\draw[ultra thick,-latex](-0.5,-1)--(-1.4,-2);\node at(-2.5,-2){$\substack{17^2,17^3,\dotsc,17^{q-1}}$};
\draw(-0.5,-1)--(-0.825,-2)--(-0.575,-2)--(-0.5,-1);\node[below]at(-0.7,-2){$\substack{\{3,5,7,19\}\\\times17}$};
\draw(0,0)--(1.5,-1);\draw[ultra thick,-latex](1.5,-1)--(0,-2);\node[above]at(0.5,-1.5){$\substack{17^2,17^3,\\\dotsc,17^{q-1}}$};
\draw(1.5,-1)--(0.575,-2)--(0.825,-2)--(1.5,-1);\node[below]at(0.7,-2){$\substack{\{3,5,7\}\\\times17}$};
\draw(1.5,-1)--(1.775,-2)--(2.025,-2)--(1.5,-1);\node[below]at(1.9,-2){$\substack{\{3,5,7\}\\\times13\times23\\\times19\times17}$};
\end{tikzpicture}
\caption{$T_5$ in Figure~\ref{fig:five21sT1}}
\label{fig:five21sT5}
\end{minipage}
\end{figure}

\begin{figure}[H]
\centering
\begin{tikzpicture}[scale=1]
\draw[ultra thick,-latex](0,0)--(-5,-1);\node at(-5.6,-1){$\substack{29^2,29^3,\\\dotsc,29^{q-1}}$};
\draw(0,0)--(-4.2,-1)--(-3.8,-1)--(0,0);\node[below]at(-4,-1){$\substack{[3^2]\times\{5,11,7\}\\\times29}$};

\draw(0,0)--(-2.5,-1);\draw[ultra thick,-latex](-2.5,-1)--(-3.5,-2);\node at(-4.6,-2){$\substack{13^2,13^3,\dotsc,13^{q-1}}$};
\draw(-2.5,-1)--(-2.825,-2)--(-2.575,-2)--(-2.5,-1);\node[below]at(-2.7,-2){$\substack{[3^2]\times\{5,7\}\\\times29\times13}$};

\draw(0,0)--(-0.5,-1);\draw[ultra thick,-latex](-0.5,-1)--(-1.5,-2);\node[above]at(-1.5,-1.7){$\substack{13^2,13^3,\\\dotsc,13^{q-1}}$};
\draw(-0.5,-1)--(-0.825,-2)--(-0.575,-2)--(-0.5,-1);\node[below]at(-0.7,-2){$\substack{[3^2]\times\{5,7\}\\\times11\times29\times13}$};

\draw(0,0)--(1.5,-1);\draw[ultra thick,-latex](1.5,-1)--(0.5,-2);\node[above]at(0.5,-1.7){$\substack{17^2,17^3,\\\dotsc,17^{q-1}}$};
\draw(1.5,-1)--(1.175,-2)--(1.425,-2)--(1.5,-1);\node[below]at(1.3,-2){$\substack{[3^2]\times\{5,7,11\}\\\times29\times17\\16\text{ branches}}$};

\draw(0,0)--(3.5,-1);\draw[ultra thick,-latex](3.5,-1)--(2.5,-2);\node[above]at(2.5,-1.7){$\substack{19^2,19^3,\\\dotsc,19^{q-1}}$};
\draw(3.5,-1)--(3.175,-2)--(3.425,-2)--(3.5,-1);\node[below]at(3.3,-2){$\substack{[3^2]\times\{5,7,11\}\\\times29\times19\\18\text{ branches}}$};
\end{tikzpicture}
\caption{$T_2$ in Figure~\ref{fig:five21s}}
\label{fig:five21sT2}
\end{figure}

\begin{figure}[H]
\centering
\begin{tikzpicture}[scale=1]
\draw[ultra thick,-latex](0,0)--(-6,-1);\node at(-6.6,-1){$\substack{31^2,31^3,\\\dotsc,31^{q-1}}$};
\draw(0,0)--(-5.2,-1)--(-4.8,-1)--(0,0);\node[below]at(-5,-1){$\substack{[3^2]\times\{5,7,11\}\\\times31}$};

\draw(0,0)--(-3.5,-1);\draw[ultra thick,-latex](-3.5,-1)--(-4.5,-2);\node at(-5.6,-2){$\substack{13^2,13^3,\dotsc,13^{q-1}}$};
\draw(-3.5,-1)--(-3.825,-2)--(-3.575,-2)--(-3.5,-1);\node[below]at(-3.7,-2){$\substack{[3^2]\times\{5,7\}\\\times31\times13}$};

\draw(0,0)--(-1.5,-1);\draw[ultra thick,-latex](-1.5,-1)--(-2.5,-2);\node[above]at(-2.5,-1.7){$\substack{13^2,13^3,\\\dotsc,13^{q-1}}$};
\draw(-1.5,-1)--(-1.825,-2)--(-1.575,-2)--(-1.5,-1);\node[below]at(-1.7,-2){$\substack{[3^2]\times\{5,7\}\\\times11\times31\times13}$};

\draw(0,0)--(0.5,-1);\draw[ultra thick,-latex](0.5,-1)--(-0.5,-2);\node[above]at(-0.5,-1.7){$\substack{17^2,17^3,\\\dotsc,17^{q-1}}$};
\draw(0.5,-1)--(0.175,-2)--(0.425,-2)--(0.5,-1);\node[below]at(0.3,-2){$\substack{[3^2]\times\{5,7,11\}\\\times31\times17\\16\text{ branches}}$};

\draw(0,0)--(2.5,-1);\draw[ultra thick,-latex](2.5,-1)--(1.5,-2);\node[above]at(1.5,-1.7){$\substack{19^2,19^3,\\\dotsc,19^{q-1}}$};
\draw(2.5,-1)--(2.175,-2)--(2.425,-2)--(2.5,-1);\node[below]at(2.3,-2){$\substack{[3^2]\times\{5,7,11\}\\\times31\times19\\18\text{ branches}}$};

\draw(0,0)--(4.5,-1);\draw[ultra thick,-latex](4.5,-1)--(3.5,-2);\node[above]at(3.5,-1.7){$\substack{23^2,23^3,\\\dotsc,23^{q-1}}$};
\draw(4.5,-1)--(4.175,-2)--(4.425,-2)--(4.5,-1);\node[below]at(4.3,-2){$\substack{[3^2]\times\{5,7,11\}\\\times31\times23\\22\text{ branches}}$};

\draw(0,0)--(6.5,-1);\draw[ultra thick,-latex](6.5,-1)--(5.5,-2);\node[above]at(5.5,-1.7){$\substack{29^2,29^3,\\\dotsc,29^{q-1}}$};
\draw(6.5,-1)--(5.875,-2)--(6.125,-2)--(6.5,-1);\node[below]at(6,-2){$\substack{[3^2]\\\times\{5,11\}\\\times29}$};
\draw(6.5,-1)--(7.375,-2)--(7.625,-2)--(6.5,-1);\node[below]at(7.5,-2){$\substack{[3^2]\times\{5,7,11\}\\\times31\times29\\16\text{ branches}}$};
\end{tikzpicture}
\caption{$T_3$ in Figure~\ref{fig:five21s}}
\label{fig:five21sT3}
\end{figure}
\end{proof}

\begin{theorem}\label{thm:eight25s}
There exists a covering system of the integers such that all moduli are odd, greater than 1, and distinct except that the modulus $25$ is used eight times.
\end{theorem}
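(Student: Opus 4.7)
The plan is to mirror the explicit tree-diagram constructions used in the proofs of Theorems~\ref{thm:three9s}, \ref{thm:four15s}, and \ref{thm:five21s}, producing figures that realize a covering system using exactly eight copies of the modulus $25$. Applying Theorem~\ref{thm:splittingn} to the best known covering system witnessing $t_5 \leq 3$ (with $k=5$, $m=5$, $t=3$) only yields $25$ used at most $m(t-1)+1 = 11$ times, since $\gcd(5,5) = 5 \neq 1$ precludes the stronger $\gcd$-based bound. Consequently, a direct construction is needed in order to push $11$ down to $8$.

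I first fix a prime $q > 31$, large enough to serve as the uniform cap on all prime-power chains, exactly as in the earlier proofs. At the root of the tree, one residue class modulo $5$ is absorbed by the modulus $5$ (accounting for $5$ residues mod $25$), eight further residues modulo $25$ are absorbed by the eight allowed copies of $25$, and the remaining twelve residue classes modulo $25$ are passed into subtrees rooted at the higher prime-power moduli $5^3, 5^4, \dotsc, 5^{q-1}$. Each such subtree then branches on small distinct odd primes, namely an appropriate subset of $\{3, 7, 11, 13, 17, 19, 23, 29, 31\}$ together with their prime-power chains up to exponent $q-1$. The bracket notation $[m_1^\alpha] \times \{m_2, \dotsc, m_\ell\} \times m_0$ introduced just before Theorem~\ref{thm:p-5} is what allows me to lift a single residue through an entire prime-power chain without repeating any modulus.

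The main obstacle is the combinatorial bookkeeping: the twelve surviving residue classes mod $25$ must be routed through these subtrees, and at every internal node one must simultaneously ensure that the children partition the parent's surviving residues and that no leaf modulus coincides with any other modulus appearing anywhere in the tree. In practice, this is handled by maintaining a running list of moduli used along each root-to-leaf path and by invoking Lemma~\ref{lem:coveringshift} and the Chinese Remainder Theorem to redistribute residues whenever a clash threatens. Once the tree is exhibited, correctness follows in the same automatic fashion as in the earlier theorems: each leaf labels a single residue class modulo the product of moduli along its root-to-leaf path, together the leaves exhaust every integer modulo the $\lcm$ of the leaf moduli, and the modulus count verifies that $25$ appears exactly eight times while every other modulus is a distinct odd integer greater than $1$.
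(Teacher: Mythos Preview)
Your proposal is not a proof: the entire content of Theorem~\ref{thm:eight25s} is the explicit tree, and you never exhibit one. Saying that ``once the tree is exhibited, correctness follows in the same automatic fashion as in the earlier theorems'' is precisely the step that is missing. The combinatorial bookkeeping you identify as ``the main obstacle'' \emph{is} the proof; a description of the framework plus an appeal to Lemma~\ref{lem:coveringshift} and the Chinese Remainder Theorem does not substitute for it.

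Moreover, the outline you give is not structurally sound. After the modulus $5$ absorbs five residues modulo $25$ and the eight copies of $25$ absorb eight more, you say the remaining twelve residue classes modulo $25$ ``are passed into subtrees rooted at the higher prime-power moduli $5^3,5^4,\dotsc,5^{q-1}$.'' But each $5^k$ may be used only once in the whole system, and the standard thick-arrow chain $5^3,5^4,\dotsc,5^{q-1}$ disposes of exactly \emph{one} residue class modulo $25$, not twelve. So you cannot root twelve subtrees there; eleven classes modulo $25$ remain with no mechanism to absorb them. For comparison, the paper's construction does not open with $5$ at the root at all: it first splits on $3$ (via the chain $3^2,\dotsc,3^{q-1}$ and the modulus $3$), then on $5$, distributes the eight copies of $25$ as $5+3$ across two distinct branches of that $5$-split, spends the modulus $3\times5^2$ as well, and only then descends through $5^3$, $7$, $11$, $13$, $17$, $19$, $23$, $29$, $31$ in several interlocking subtrees. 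None of that architecture is present in your sketch.
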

\begin{proof}
A tree diagram of such a covering system is given by Figures~\ref{fig:eight25s}-\ref{fig:eight25sT4}, with the main tree in Figure~\ref{fig:eight25s} and subtrees in Figures~\ref{fig:eight25sT1}-\ref{fig:eight25sT4}. Here, $q>31$ is a prime.

\begin{figure}[H]
\centering
\begin{tikzpicture}[scale=1]
\draw[ultra thick,-latex](0,0)--(-5,-1);
\node[above]at(-3,-0.5){$\substack{3^2,3^3,\dotsc,3^{q-1}}$};
\draw(0,0)--(-3,-1);\node[below]at(-3,-1){$\substack{3}$};
\draw(0,0)--(1,-1);

\draw(1,-1)--(-6.75,-2)--(-6.25,-2)--(1,-1);\node[below]at(-6.5,-2){$\substack{\{3\}\times5}$};
\draw(1,-1)--(-5,-2)--(-7.625,-3)--(-7.375,-3)--(-5,-2);\node[below]at(-7.5,-3){$\substack{5^2,5^2,5^2,5^2,5^2\\5\text{ branches}}$};
\draw(1,-1)--(-4,-2)--(-5.625,-3)--(-5.375,-3)--(-4,-2);\node[below]at(-5.5,-3){$\substack{5^2,5^2,5^2\\3\text{ branches}}$};
\draw(-4,-2)--(-4.2,-3);\node[below]at(-4.2,-3){$\substack{3\times5^2}$};

\draw(-4,-2)--(0,-3);\draw[ultra thick,-latex](0,-3)--(-4,-4);
\node[above]at(-2.6,-3.6){$\substack{5^4,5^5,\dotsc,5^{q-1}}$};
\draw(0,-3)--(-3.125,-4)--(-2.875,-4)--(0,-3);\node[below]at(-3,-4){$\substack{\{3\}\times5^3}$};

\draw(0,-3)--(0,-4);
\draw[ultra thick,-latex](0,-4)--(-5,-5);\node[above]at(-5.2,-5){$\substack{7^2,7^3,\dotsc,7^{q-1}}$};
\draw(0,-4)--(-3.625,-5)--(-3.375,-5)--(0,-4);\node[below]at(-3.5,-5){$\substack{\{3,5^2\}\times7}$};

\draw(0,-4)--(-2,-5);
\draw[ultra thick,-latex](-2,-5)--(-3,-6);\node at(-4.1,-6){$\substack{11^2,11^3,\dotsc,11^{q-1}}$};
\draw(-2,-5)--(-2.125,-6)--(-1.875,-6)--(-2,-5);\node[below]at(-2,-6){$\substack{\{3,7,5^2\}\\\times11}$};
\draw(-2,-5)--(-0.625,-6)--(-0.375,-6)--(-2,-5);\node[below]at(-0.5,-6){$\substack{\{3\}\times5^3\times11}$};

\draw(0,-4)--(-0.5,-5);\node[below]at(-0.5,-5){$\substack{5^3\times7}$};

\draw(0,-3)--(5,-4);
\draw[ultra thick,-latex](5,-4)--(1,-5);\node[above]at(1.8,-4.7){$\substack{7^2,7^3,\dotsc,7^{q-1}}$};
\draw(5,-4)--(2.375,-5)--(2.625,-5)--(5,-4);\node[below]at(2.5,-5){$\substack{\{3,5^2\}\times7}$};

\draw(5,-4)--(4,-5);
\draw[ultra thick,-latex](4,-5)--(3,-6);\node at(1.9,-6){$\substack{11^2,11^3,\dotsc,11^{q-1}}$};
\draw(4,-5)--(3.875,-6)--(4.125,-6)--(4,-5);\node[below]at(4,-6){$\substack{\{3,7,5^2\}\\\times11}$};
\draw(4,-5)--(5.875,-6)--(6.125,-6)--(4,-5);\node[below]at(6,-6){$\substack{\{3\}\times5^3\times7\times11}$};

\draw(5,-4)--(6,-5);\node[below]at(6,-5){$\substack{3\times5^3\times7}$};

\draw(1,-1)--(6,-2);
\draw[ultra thick,-latex](6,-2)--(3,-3);\node[above]at(3.9,-2.6){$\substack{7^2,7^3,\dotsc,7^{q-1}}$};
\draw(6,-2)--(3.875,-3)--(4.125,-3)--(6,-2);\node[below]at(4,-3){$\substack{\{3,5\}\times7}$};
\draw(6,-2)--(5.5,-3);\node[below]at(5.5,-3){$\substack{T_1}$};
\draw(6,-2)--(6.5,-3);\node[below]at(6.5,-3){$\substack{T_2}$};
\end{tikzpicture}
\caption{An odd covering with $25$ used exactly eight times as a modulus}
\label{fig:eight25s}
\end{figure}
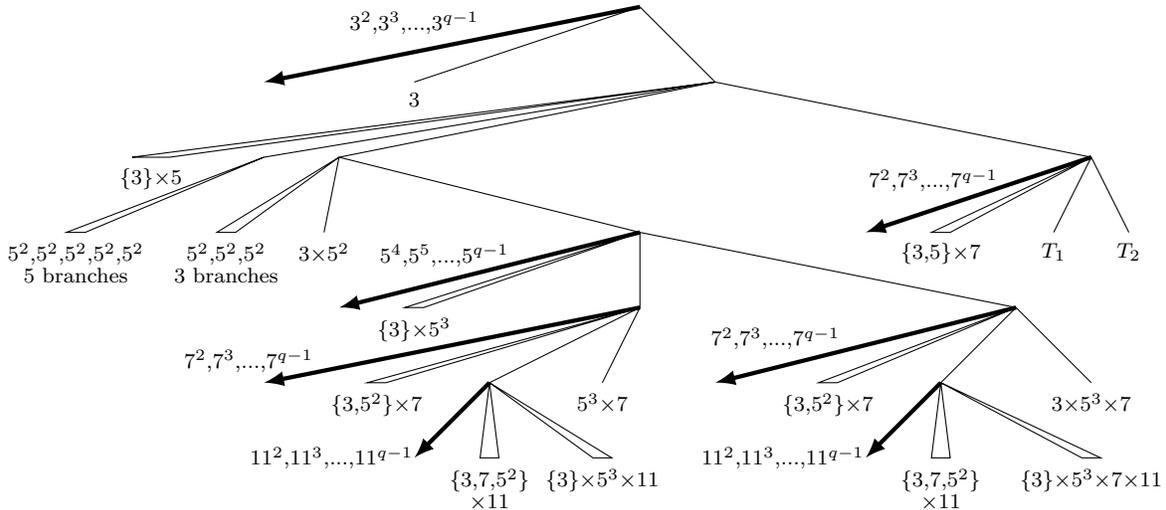

\begin{figure}[H]
\centering
\begin{tikzpicture}[scale=1]
\draw[ultra thick,-latex](-1,0)--(-6,-1);\node[above]at(-4,-0.5){$\substack{11^2,11^3,\dotsc,11^{q-1}}$};
\draw(-1,0)--(-5.125,-1)--(-4.875,-1)--(-1,0);\node[below]at(-5,-1){$\substack{\{3,7\}\times11}$};
\draw(-1,0)--(-3.125,-1)--(-2.875,-1)--(-1,0);\node[below]at(-3,-1){$\substack{\{3\}\times5\times7\times11}$};
\draw(-1,0)--(-1.125,-1)--(-0.875,-1)--(-1,0);\node[below]at(-1,-1){$\substack{\{3\}\times5\times11}$};

\draw(-1,0)--(1,-1);
\draw[ultra thick,-latex](1,-1)--(-2,-2);\node at(-3.1,-2){$\substack{17^2,17^3,\dotsc,17^{q-1}}$};
\draw(1,-1)--(-1.125,-2)--(-0.875,-2)--(1,-1);\node[below]at(-1,-2){$\substack{\{3,5,7,11\}\times17}$};

\draw(-1,0)--(4.5,-1);
\draw[ultra thick,-latex](4.5,-1)--(1.5,-2);\node[above]at(2.5,-1.5){$\substack{19^2,19^3,\dotsc,19^{q-1}}$};
\draw(4.5,-1)--(2.375,-2)--(2.625,-2)--(4.5,-1);\node[below]at(2.5,-2){$\substack{\{3,5,7,11\}\times19}$};

\draw(4.5,-1)--(4.5,-2);
\draw[ultra thick,-latex](4.5,-2)--(2.5,-3);\node at(1.6,-3){$\substack{5^3,5^4,\dotsc,5^{q-1}}$};
\draw(4.5,-2)--(3.375,-3)--(3.625,-3)--(4.5,-2);\node[below]at(3.5,-3){$\substack{\{3,7\}\times19\times5^2}$};

\draw(4.5,-1)--(7,-2);
\draw[ultra thick,-latex](7,-2)--(6,-3);\node[above]at(5.5,-2.8){$\substack{5^3,5^4,\dotsc,5^{q-1}}$};
\draw(7,-2)--(6.875,-3)--(7.125,-3)--(7,-2);\node[below]at(7,-3){$\substack{\{3,7\}\times11\times19\times5^2}$};
\end{tikzpicture}
\caption{$T_1$ in Figure~\ref{fig:eight25s}}
\label{fig:eight25sT1}
\end{figure}

\begin{figure}[H]
\centering
\begin{tikzpicture}[scale=1]
\draw[ultra thick,-latex](0,0)--(-7,-1);\node[above]at(-4.5,-0.5){$\substack{13^2,13^3,\dotsc,13^{q-1}}$};
\draw(0,0)--(-6,-1)--(-5.5,-1)--(0,0);\node[below]at(-6,-1){$\substack{\{3,5,7\}\times13}$};

\draw(0,0)--(-4.5,-1);
\draw[ultra thick,-latex](-4.5,-1)--(-6,-2);\node at(-6.5,-2){$\substack{5^3,5^4,\\\dotsc,5^{q-1}}$};
\draw(-4.5,-1)--(-5.425,-2)--(-5.175,-2)--(-4.5,-1);\node[below]at(-5.3,-2){$\substack{\{3,7\}\\\times13\times5^2}$};

\draw(0,0)--(-1.5,-1);
\draw[ultra thick,-latex](-1.5,-1)--(-4.7,-2);\node[above]at(-3.5,-1.7){$\substack{11^2,11^3,\\\dotsc,11^{q-1}}$};
\draw(-1.5,-1)--(-4.025,-2)--(-3.775,-2)--(-1.5,-1);\node[below]at(-4.1,-2){$\substack{\{3\}\times11}$};
\draw(-1.5,-1)--(-3.225,-2)--(-2.975,-2)--(-1.5,-1);\node[below]at(-3.1,-2){$\substack{\{3,5\}\\\times13\times11}$};
\draw(-1.5,-1)--(-2.225,-2)--(-1.975,-2)--(-1.5,-1);\node[below]at(-2.1,-2){$\substack{\{3\}\times5\\\times11}$};

\draw(-1.5,-1)--(-1.2,-2);
\draw[ultra thick,-latex](-1.2,-2)--(-1.9,-3);\node at(-2.8,-3){$\substack{5^3,5^4,\dotsc,5^{q-1}}$};
\draw(-1.2,-2)--(-1.525,-3)--(-1.275,-3)--(-1.2,-2);\node[below]at(-1.4,-3){$\substack{\{3,7\}\\\times13\times11\times5^2}$};
\draw(-1.5,-1)--(0.5,-2);
\draw[ultra thick,-latex](0.5,-2)--(-0.5,-3);\node[above]at(-0.5,-2.7){$\substack{29^2,29^3,\\\dotsc,29^{q-1}}$};
\draw(0.5,-2)--(0.175,-3)--(0.425,-3)--(0.5,-2);\node[below]at(0.3,-3){$\substack{\{3,5,7,11,13\}\\\times29\\28\text{ branches}}$};

\draw(0,0)--(4,-1);
\draw[ultra thick,-latex](4,-1)--(0.9,-2);\node[above]at(1.9,-1.5){$\substack{11^2,11^3,\dotsc,11^{q-1}}$};
\draw(4,-1)--(1.575,-2)--(1.825,-2)--(4,-1);\node[below]at(1.5,-2){$\substack{\{3\}\times11}$};
\draw(4,-1)--(2.575,-2)--(2.825,-2)--(4,-1);\node[below]at(2.7,-2){$\substack{\{3,5\}\times7\\\times13\times11}$};
\draw(4,-1)--(3.675,-2)--(3.925,-2)--(4,-1);\node[below]at(3.8,-2){$\substack{\{3\}\times5\\\times11}$};

\draw(4,-1)--(4.8,-2);
\draw[ultra thick,-latex](4.8,-2)--(3.8,-3);\node at(2.7,-3){$\substack{17^2,17^3,\dotsc,17^{q-1}}$};
\draw(4.8,-2)--(4.175,-3)--(4.425,-3)--(4.8,-2);\node[below]at(4.3,-3){$\substack{\{3,5,13,11\}\\\times17}$};

\draw(4,-1)--(7,-2);
\draw[ultra thick,-latex](7,-2)--(5.2,-3);\node[above]at(5.6,-2.7){$\substack{19^2,19^3,\\\dotsc,19^{q-1}}$};
\draw(7,-2)--(5.775,-3)--(6.025,-3)--(7,-2);\node[below]at(5.9,-3){$\substack{\{3,5,13,11\}\\\times19}$};

\draw(7,-2)--(7,-3);
\draw[ultra thick,-latex](7,-3)--(6.3,-4);\node at(5.4,-4){$\substack{5^3,5^4,\dotsc,5^{q-1}}$};
\draw(7,-3)--(6.675,-4)--(6.925,-4)--(7,-3);\node[below]at(6.8,-4){$\substack{\{3,13\}\\\times19\times5^2}$};

\draw(7,-2)--(8.3,-3);
\draw[ultra thick,-latex](8.3,-3)--(7.6,-4);\node[above]at(7.6,-3.6){$\substack{5^3,5^4,\\\dotsc,5^{q-1}}$};
\draw(8.3,-3)--(7.975,-4)--(8.225,-4)--(8.3,-3);\node[below]at(8.1,-4){$\substack{\{3,13\}\times11\\\times19\times5^2}$};

\draw(0,0)--(7,-1);
\node[below]at(7,-1){$\substack{T_3}$};
\end{tikzpicture}
\caption{$T_2$ in Figure~\ref{fig:eight25s}}
\label{fig:eight25sT2}
\end{figure}

\begin{figure}[H]
\centering
\begin{tikzpicture}[scale=1]
\draw[ultra thick,-latex](0,0)--(-7,-1);\node[above]at(-4.5,-0.5){$\substack{23^2,23^3,\dotsc,23^{q-1}}$};
\draw(0,0)--(-6.3,-1)--(-5.8,-1)--(0,0);\node[below]at(-6.4,-1){$\substack{\{3,5,7,13\}\\\times23}$};

\draw(0,0)--(-5,-1);
\draw[ultra thick,-latex](-5,-1)--(-6.5,-2);\node at(-7,-2){$\substack{5^3,5^4,\\\dotsc,5^{q-1}}$};
\draw(-5,-1)--(-6.125,-2)--(-5.875,-2)--(-5,-1);\node[below]at(-6,-2){$\substack{\{3,7\}\\\times23\times5^2}$};

\draw(0,0)--(-3.5,-1);
\draw[ultra thick,-latex](-3.5,-1)--(-4.7,-2);\node[above]at(-4.6,-1.8){$\substack{5^3,5^4,\\\dotsc,5^{q-1}}$};
\draw(-3.5,-1)--(-4.325,-2)--(-4.075,-2)--(-3.5,-1);\node[below]at(-4.2,-2){$\substack{\{3,7\}\times13\\\times23\times5^2}$};

\draw(0,0)--(-1.8,-1);
\draw[ultra thick,-latex](-1.8,-1)--(-3,-2);\node[above]at(-3,-1.8){$\substack{31^2,31^3,\\\dotsc,31^{q-1}}$};
\draw(-1.8,-1)--(-2.625,-2)--(-2.375,-2)--(-1.8,-1);\node[below]at(-2.5,-2){$\substack{\{3,5,7,13,23\}\\\times31\\30\text{ branches}}$};

\draw(0,0)--(0,-1);
\draw[ultra thick,-latex](0,-1)--(-1.5,-2);\node[above]at(-1.2,-1.7){$\substack{11^2,11^3,\\\dotsc,11^{q-1}}$};
\draw(0,-1)--(-1.025,-2)--(-0.775,-2)--(0,-1);\node[below]at(-0.9,-2){$\substack{\{3\}\times11}$};
\draw(0,-1)--(-0.025,-2)--(0.225,-2)--(0,-1);\node[below]at(0.1,-2){$\substack{\{3,5\}\\\times23\times11}$};
\draw(0,-1)--(0.925,-2)--(1.225,-2)--(0,-1);\node[below]at(1.1,-2){$\substack{\{3\}\times5\\\times11}$};
\draw(0,-1)--(2.025,-2)--(2.325,-2)--(0,-1);\node[below]at(2.2,-2){$\substack{\{3\}\times13\\\times23\times11}$};

\draw(0,0)--(4,-1);
\draw[ultra thick,-latex](4,-1)--(2.8,-2);\node[above]at(2.9,-1.7){$\substack{11^2,11^3,\\\dotsc,11^{q-1}}$};
\draw(4,-1)--(3.375,-2)--(3.625,-2)--(4,-1);\node[below]at(3.5,-2){$\substack{\{3\}\times11}$};
\draw(4,-1)--(4.375,-2)--(4.625,-2)--(4,-1);\node[below]at(4.5,-2){$\substack{\{3,5\}\\\times7\times23\\\times11}$};
\draw(4,-1)--(5.375,-2)--(5.625,-2)--(4,-1);\node[below]at(5.5,-2){$\substack{\{3\}\times5\\\times11}$};
\draw(4,-1)--(6.475,-2)--(6.725,-2)--(4,-1);\node[below]at(6.6,-2){$\substack{\{3\}\times5\\\times13\times23\\\times11}$};

\draw(0,0)--(6.5,-1);
\node[below]at(6.5,-1){$\substack{T_4}$};
\end{tikzpicture}
\caption{$T_3$ in Figure~\ref{fig:eight25sT2}}
\label{fig:eight25sT3}
\end{figure}

\begin{figure}[H]
\centering
\begin{tikzpicture}[scale=1]
\draw[ultra thick,-latex](0,0)--(-3,-1);\node[above]at(-2,-0.5){$\substack{11^2,11^3,\dotsc,11^{q-1}}$};
\draw(0,0)--(-2.125,-1)--(-1.875,-1)--(0,0);\node[below]at(-2,-1){$\substack{\{3\}\times11}$};
\draw(0,0)--(-0.625,-1)--(-0.375,-1)--(0,0);\node[below]at(-0.5,-1){$\substack{\{3,5\}\times7\\\times13\times23\times11}$};
\draw(0,0)--(0.875,-1)--(1.125,-1)--(0,0);\node[below]at(1,-1){$\substack{\{3\}\times5\\\times11}$};

\draw(0,0)--(3,-1);
\draw[ultra thick,-latex](3,-1)--(1.5,-2);\node at(0.6,-2){$\substack{5^3,5^4,\dotsc,5^{q-1}}$};
\draw(3,-1)--(2.375,-2)--(2.625,-2)--(3,-1);\node[below]at(2.5,-2){$\substack{\{3,7\}\\\times23\times11\times5^2}$};

\draw(0,0)--(4.5,-1);
\draw[ultra thick,-latex](4.5,-1)--(4.5,-2);\node at(3.9,-1.5){$\substack{5^3,5^4,\\\dotsc,5^{q-1}}$};
\draw(4.5,-1)--(5.375,-2)--(5.625,-2)--(4.5,-1);\node[below]at(5.5,-2){$\substack{\{3,7\}\times13\\\times23\times11\times5^2}$};
\end{tikzpicture}
\caption{$T_4$ in Figure~\ref{fig:eight25sT3}}
\label{fig:eight25sT4}
\end{figure}
\end{proof}

\section{Covering Subsets of the Integers}\label{sec:coveringsubsets}
In Section~\ref{sec:introduction}, we briefly discussed how covering systems with repeated moduli can be applied to derive odd coverings of subsets of the integers. The following theorem demonstrates such an application of the covering system presented in the proof of Theorem~\ref{thm:three9s}.

\begin{theorem}\label{thm:mod9covering}
Let $0\leq j\leq 8$ be an integer, and let $S_j=\{a\in\mathbb{Z}:a\equiv j+3\pmod{9}\text{ or }a\equiv j-3\pmod{9}\}$. Any subset of the integers with only finitely many terms in $S_j$ has an odd covering.
\end{theorem}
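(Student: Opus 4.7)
The plan is to adapt the covering system $\mathcal{C}_{9,3}$ constructed in the proof of Theorem~\ref{thm:three9s} to produce an odd covering of $\mathbb{Z}\setminus S_j$, and then patch up the finitely many remaining elements of $T\cap S_j$ by adjoining congruences with fresh large odd prime moduli. This follows exactly the template described in Section~\ref{sec:introduction}.

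The first step is to verify that the three uses of modulus $9$ in $\mathcal{C}_{9,3}$ occupy residues modulo $9$ of the form $\{r_0,\,r_0+3,\,r_0+6\}\pmod 9$ for some $r_0$. This is a direct inspection of Figure~\ref{fig:three9s}: the three leaves labeled $3^2$ at the top left all share a common parent immediately below the root, and the three direct children of the root partition $\mathbb{Z}$ into its three residue classes modulo $3$. Hence these three leaves collectively cover all integers in one fixed residue class mod~$3$, forcing their residues modulo $9$ to form a single arithmetic progression with common difference $3$.

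Next, I would invoke Lemma~\ref{lem:coveringshift} to shift $\mathcal{C}_{9,3}$ by an integer $s$ with $s\equiv j-r_0\pmod 9$, obtaining a covering system $\mathcal{C}'=\{j-3\pmod 9,\,j\pmod 9,\,j+3\pmod 9\}\cup\mathfrak{C}$, where $\mathfrak{C}$ is a collection of congruences whose moduli are distinct odd integers greater than $1$ and not equal to $9$. Removing the two congruences with residues $j-3$ and $j+3$ modulo $9$ yields $\mathcal{C}^\star=\{j\pmod 9\}\cup\mathfrak{C}$, which still has distinct odd moduli greater than $1$. Every integer outside $S_j$ avoids both removed residues modulo $9$, so by the covering property of $\mathcal{C}'$ it must be covered by $\mathcal{C}^\star$. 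Thus $\mathcal{C}^\star$ is an odd covering of $\mathbb{Z}\setminus S_j$.

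Finally, given any subset $T\subseteq\mathbb{Z}$ with $T\cap S_j=\{x_1,\ldots,x_k\}$ finite, choose pairwise distinct odd primes $p_1,\ldots,p_k$, each larger than every modulus appearing in $\mathcal{C}^\star$, and form
\[
\mathcal{C}^\star\cup\{x_i\pmod{p_i}:1\le i\le k\}.
\]
The moduli are still distinct odd integers greater than $1$; every element of $T\setminus S_j$ is covered by $\mathcal{C}^\star$; and each $x_i\in T\cap S_j$ is covered by its dedicated new congruence. Hence this is an odd covering of $T$. The only delicate step is the residue analysis in the first paragraph—once that structural fact about Figure~\ref{fig:three9s} is in hand, the rest is a direct combination of Lemma~\ref{lem:coveringshift} with the observation from the introduction about removing surplus repeated congruences.
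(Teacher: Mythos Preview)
Your proposal is correct and follows essentially the same approach as the paper's proof: both observe that the three repeated $9$-congruences in the system from Theorem~\ref{thm:three9s} occupy residues $\{r_0,r_0+3,r_0+6\}$ modulo $9$ (the paper simply states they are $0,3,6$), shift via Lemma~\ref{lem:coveringshift}, drop the two surplus congruences to obtain an odd covering of $\mathbb{Z}\setminus S_j$, and then patch the finite exceptional set with fresh large odd moduli. The only cosmetic difference is that the paper uses the consecutive odd integers $M+2\ell$ for the patch-up moduli rather than large primes.
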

\begin{proof}Notice that the covering system presented in Theorem~\ref{thm:three9s} is of the form $\{0\pmod{9},3\pmod{9},6\pmod{9}\}\cup\{r_i\pmod{m_i}:1\leq i\leq\upsilon\}$, where all moduli $m_i$ are odd, distinct, greater than $1$, and not equal to $9$. By Lemma~\ref{lem:coveringshift}, $\mathfrak{C}_j=\{j\pmod{9}\}\cup\{r_i+j\pmod{m_i}:1\leq i\leq\upsilon\}$ is an odd covering of the set $\Z\setminus S_j$.

Let $T\subseteq\Z$ be such that $T\cap S_j=\{a_\ell:1\leq\ell\leq\tau\}$ for some nonnegative integer $\tau$. Then $\{j\pmod{9}\}\cup\{r_i+j\pmod{m_i}\}\cup\{a_\ell\pmod{M+2\ell}\}$ is an odd covering of $T$, where $M$ is the maximum modulus among $m_i$.
\end{proof}

By considering $j=0$ in Theorem~\ref{thm:mod9covering}, we obtain the following corollary.

\begin{corollary}\label{cor:v_3>2}
Any subset of the integers with only finitely many terms not in $\{a\in\Z:\text{if }3\mid a\text{ then }3^2\mid a\}$ has an odd covering.
\end{corollary}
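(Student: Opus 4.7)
The plan is to derive this corollary as an immediate specialization of Theorem~\ref{thm:mod9covering}, taking $j = 0$. The only content of the proof is to identify the set $S_0$ with the complement (in $\Z$) of $\{a \in \Z : \text{if } 3 \mid a \text{ then } 3^2 \mid a\}$.

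First, I would unpack the definition of $S_j$ at $j = 0$. By definition,
\[
S_0 = \{a \in \Z : a \equiv 3 \pmod 9\} \cup \{a \in \Z : a \equiv -3 \pmod 9\} = \{a \in \Z : a \equiv 3 \text{ or } 6 \pmod 9\}.
\]
These are precisely the integers whose $3$-adic valuation equals $1$, i.e.\ the integers divisible by $3$ but not by $9$.

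Second, I would take complements: $a \in \Z \setminus S_0$ if and only if either $3 \nmid a$ or $9 \mid a$, which is exactly the statement that $3 \mid a$ implies $3^2 \mid a$. Hence $\Z \setminus S_0 = \{a \in \Z : \text{if } 3 \mid a \text{ then } 3^2 \mid a\}$. Consequently, for any $T \subseteq \Z$, having only finitely many elements outside this latter set is the same as having only finitely many elements inside $S_0$, i.e.\ $T \cap S_0$ is finite.

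The conclusion then follows directly from Theorem~\ref{thm:mod9covering} applied with $j = 0$, which guarantees an odd covering of any such $T$. There is no genuine obstacle: the entire argument is a definition chase translating the divisibility condition into the modular condition defining $S_0$.
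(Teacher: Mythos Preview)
Your proof is correct and follows exactly the paper's approach: the paper simply states that the corollary follows by taking $j=0$ in Theorem~\ref{thm:mod9covering}, and you have spelled out the (trivial) identification of $S_0$ with the complement of $\{a\in\Z:\text{if }3\mid a\text{ then }3^2\mid a\}$.
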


Before providing a proof of Theorem~\ref{thm:integersequences}, we first discuss the existence of an odd covering of the perfect numbers and an odd covering of the Fermat numbers. Recall that a positive integer $n$ is called a \emph{perfect number} if the sum of the proper divisors of $n$ is equal to $n$. First proven by Euler, it is well known that every even perfect number has the form $2^{p-1}(2^p-1)$, where $p$ and $2^p-1$ are both prime. Currently, all known perfect numbers are even and are therefore of this form. Note that for an odd prime $p$, $2^{p-1}(2^p-1)\equiv 1\pmod{3}$. Although it is not known whether any odd perfect numbers exist, Touchard in 1953 \cite{touchard} showed that any odd perfect number $n$ must satisfy either $n\equiv 1\pmod{12}$ or $n\equiv 9\pmod{36}$. Thus, we deduce that if $n$ is a perfect number, then $n\not\equiv 2\pmod{3}$. Since $t_3\leq 2$, it follows from Lemma~\ref{lem:coveringshift}, along with our discussion preceding Theorem~\ref{thm:integersequences}, that the set of perfect numbers has an odd covering.

Next, recall that a \emph{Fermat number} is of the form $2^{2^a}+1$, where $a$ is a positive integer. All Fermat numbers are congruent to $2$ modulo $3$, and therefore the set of Fermat numbers has an odd covering. While the set of perfect numbers and the set Fermat numbers each has an odd covering that stems from $t_3\leq 2$, an odd covering of the union of these two sets does not immediately follow from that result. On the other hand, since any perfect number that satisfies $0\pmod{3}$ must also satisfy $0\pmod{9}$, we see that the union of perfect numbers and Fermat numbers has an odd covering by Corollary~\ref{cor:v_3>2}. With this observation, we are now ready to prove Theorem~\ref{thm:integersequences}.

\begin{proof}[Proof of Theorem~$\ref{thm:integersequences}$]
Let $S=\{a\in\Z:\text{if }3\mid a\text{ then }3^2\mid a\}$. The square of an integer is congruent to $0$, $1$, $4$, or $7$ modulo $9$. Thus, every sum of two squares is congruent to $0$, $1$, $2$, $4$, $5$, $7$, or $8$ modulo $9$ and is therefore in $S$. Similarly, every sum of two cubes is in $S$ since the cube of an integer is congruent to $0$, $1$, or $8$ modulo $9$, and hence, every sum of two cubes is congruent to $0$, $1$, $2$, $7$, or $8$ modulo $9$.

Any powerful number that is divisible by $3$ is, by definition, also divisible by $9$, and is therefore trivially in $S$. As for primes, the only prime that is divisible by $3$ is $3$ itself. Since powers of primes are powerful, all primes and powers of primes are elements of $S$.

It is known that the number of derangements of $n$ objects is given by $d_n=n!\sum_{i=0}^n\frac{(-1)^i}{i!}$. If $n\equiv0\pmod{3}$, then $d_n\equiv n!\frac{(-1)^n}{n!}\not\equiv0\pmod{3}$. If $n\equiv1\pmod{3}$, then 
\begin{align*}
d_n&\equiv n!\sum_{i=n-4}^n\frac{(-1)^i}{i!}\\
&=(-1)^n(n(n-1)(n-2)(n-3)-n(n-1)(n-2)+n(n-1)-n+1)\\
&=(-1)^n(n(n-1)(n-2)(n-4)+(n-1)^2)\\
&\equiv0\pmod{9}.
\end{align*}
If $n\equiv2\pmod{3}$, then
\begin{align*}
d_n&\equiv n!\sum_{i=n-3}^n\frac{(-1)^i}{i!}\\
&=(-1)^{n-1}(n(n-1)(n-2)-n(n-1)+n-1)\\
&=(-1)^{n-1}(n-1)(n^2-3n+1)\\
&\not\equiv0\pmod{3}.
\end{align*}
Therefore, every number of derangements is in $S$.

Since our discussion preceding this proof established that the set of perfect numbers and the set of Fermat numbers are subsets of $S$, the theorem now follows from Corollary~\ref{cor:v_3>2}.
\end{proof}

Theorem~\ref{thm:mod9covering} demonstrates the usefulness of the covering system presented in the proof of Theorem~\ref{thm:three9s} to the study of odd coverings of subsets of the integers. The application, however, is dependent on the residues of the repeated moduli relative to each other. For instance, it is unknown whether there exists a covering system with the congruences $4\pmod{9}$, $5\pmod{9}$, and $r\pmod{9}$ for some $0\leq r\leq 3$ or $6\leq r\leq8$, such that all other moduli are odd, distinct, greater than 1, and not equal to $9$. If such a covering system exists, then the set of the sums of three cubes will have an odd covering. This result does not follow from the covering system presented in the proof of Theorem~\ref{thm:three9s}.  Another obvious direction for improvement is to establish $t_9\leq 2$, which would also demonstrate an odd covering of other combinatorial sequences such as the Bell numbers.

\section{Acknowledgements}

These results are based on work supported by the National Science Foundation under grant numbered MPS-2150299.

\end{document}